\documentclass[11pt]{article}
\usepackage{amssymb,amsmath,MnSymbol}
\usepackage{color}
\usepackage{amsthm}
\usepackage{graphicx}
\usepackage{color}
\usepackage{authblk}
\usepackage{url}
\usepackage[dvipsnames]{xcolor}
\usepackage{rotating,booktabs}
\usepackage{arydshln}
\usepackage[pagewise]{lineno}
\usepackage{lipsum}


\setlength\dashlinedash{1pt}
\setlength\dashlinegap{1pt}
\setlength\arrayrulewidth{0.1pt}

\def\M{\mathcal M}
\def\Sym{\mbox{\rm Sym}}

\renewcommand{\phi}{\varphi}
\newcommand{\F}{\mathbb{F}}
\newcommand{\Span}[1]{\left\langle\, #1 \,\right\rangle}
\newcommand{\orho}{\overline{\rho}}
\newcommand{\AGL}{\mbox{\rm AGL}}
\newcommand{\la}{\lambda}
\newcommand{\veps}{\varepsilon}
\newcommand{\hex}[1]{{\texttt{#1}_{\text{x}}}}
\newcommand\deq{\mathrel{\stackrel{\makebox[0pt]{\mbox{\normalfont\tiny def}}}{=}}}


\newcommand{\ric}[1]{{\color{black}#1}}
\newcommand{\wt}[1]{{\color{white}#1}}

\makeatletter
\def\blfootnote{\gdef\@thefnmark{}\@footnotetext}
\makeatother

\DeclareMathOperator{\sym}{Sym}
\DeclareMathOperator{\Imm}{Im}
\DeclareMathOperator{\Ker}{Ker}
\DeclareMathOperator{\ddt}{DDT}
\DeclareMathOperator{\spann}{Span}
\DeclareMathOperator{\w}{w}

\newtheorem{theorem}{Theorem}[section]
\newtheorem{definition}[theorem]{Definition}
\newtheorem{lemma}[theorem]{Lemma}

\newtheorem{corollary}[theorem]{Corollary}
\newtheorem{remark}[theorem]{Remark}


\title{
Wave-Shaped Round Functions and Primitive Groups\blfootnote{\;\\\textit{Email adresses}: \texttt{ric.aragona@gmail.com} (R. Aragona), \texttt{marco.calderini@uib.no} (M. Calderini), \texttt{roberto.civino@univaq.it} (R. Civino), \texttt{maxsalacodes@gmail.com} (M. Sala), \texttt{ilaria.zappatore@lirmm.fr} (I. Zappatore)}
}

\author[1]{Riccardo Aragona
}
\author[2]{Marco Calderini
}
\author[1]{Roberto Civino
}
\author[3]{Massimiliano Sala
}
\author[4]{Ilaria Zappatore
 }
  
  \affil[1]{
\small{DISIM, University of L'Aquila 
}}

\affil[2]{
\small{Department of Informatics, University of Bergen 
}}
  
\affil[3]{
\small{Department of Mathematics, University of Trento 
}}

\affil[4]{
\small{LIRMM of Montpellier
}}

\date{}
\begin{document}
\maketitle
\vspace{-5mm}
\begin{abstract}
\noindent Round functions used as building blocks for iterated block ciphers, both in the case of Substitution-Permutation Networks (SPN) and Feistel Networks (FN), are often obtained as the composition of different layers which provide confusion and diffusion, and key additions. The bijectivity of any encryption function, crucial in order to make the decryption possible, is guaranteed by the use of invertible layers or by the Feistel structure. 
In this work a new family of ciphers, called \emph{wave ciphers}, is introduced. In wave ciphers, round functions feature \emph{wave functions}, which are vectorial Boolean functions obtained as the composition of \emph{non-invertible} layers, where the confusion layer enlarges the message which returns to its original size after the diffusion layer is applied. \ric{This is motivated by the fact that} relaxing the requirement that all the layers are invertible allows to consider more functions which are optimal with regard to non-linearity. \ric{In particular it allows to consider injective APN S-boxes}.
In order to guarantee efficient decryption we propose to use wave functions in Feistel Networks.
With regard to security, the immunity from some group-theoretical attacks is investigated. In particular, it is shown how to avoid that the group generated by the round functions acts imprimitively, which represents a serious flaw for the cipher. 
The primitivity of this group is derived as a consequence of a more general result, which allows to reduce the problem of proving that a given FN generates a primitive group to the one of proving that an SPN, directly related to the given FN, generates a primitive  group. Finally, a concrete instance of real-world size wave cipher is proposed as an example, and its resistance against differential and linear cryptanalysis is also established.
\end{abstract}
\medskip
\noindent\small{\textbf{Keywords:}
Cryptosystems; Feistel Networks; Substitution-Permutation Networks; non-invertible S-boxes; Almost Perfect Non-linearity; groups generated by round functions; primitive groups.}\\
\medskip
\small{\textbf{MSC 2010:} 20B15,
20B35,
94A60.
}



\section{Introduction}\label{sec:intro}
Most modern block ciphers belong to two families of symmetric cryptosystems, i.e. Substitution-Permutation Networks (SPN) and Feistel Networks (FN), and are obtained as composition of round functions. Each round function is a key-dependent permutation of the plaintext space, designed in such a way to provide both confusion and diffusion (see \cite{sha49}). Confusion is provided most of the times by means of a non-linear layer which applies Boolean functions, called S-boxes, whereas a linear map, called diffusion layer, provides diffusion. In order to perform decryption, invertible layers and the Feistel structure are used in SPN and FN, respectively.
In the framework of SPNs, which have been widely studied in last years, especially after the selection process for the NIST standard AES \cite{AES}, decryption is performed by applying in reverse order the inverse of each layer of the cipher. In the case of FNs, it is the Feistel structure itself that guarantees a fast decryption.

\paragraph{Motivation and design principles}
It is well-known that the non-linearity of the confusion layer is a crucial parameter for the security of the cipher. In particular, in order to prevent statistical attacks (e.g. differential \cite{biham} and linear \cite{linear} cryptanalysis), block ciphers' designers are interested in invertible S-boxes reaching the best possible differential uniformity, which is two. Functions satisfying such property are called almost-perfect non-linear (APN) \cite{nyberg} and are extensively studied. Unfortunately, APN permutations are known only when the dimension $s$ of the input space for the S-box is an odd number, except for the case of the Dillon's function ($s = 6$) \cite{dillon}, which nowadays represents the only isolated case \cite{canteaut2017}. It has been shown that no permutation with $s=4$ is APN \cite{calderivillasala,hou} and the problem is still without answers for $s\geq 8$. On the other hand, the cases when $s \in \{4, 8\}$ are the most used for implementation reasons.\\
In this paper we show how to define ciphers whose S-boxes are injective APN functions with $s$ inputs, $s$ even. We do this by considering non-invertible S-boxes, focusing on injective confusion layers which enlarge the message. Notice that a similar approach is considered in the block cipher CAST-128, where $8\times32$ are used \cite{CAST}. After the confusion layer is applied, a surjective diffusion layer reduces the message to its original size. By appending a key addition to the previous layers, we obtain a vectorial Boolean function which we call a \emph{wave function}.
Consequently a \emph{wave cipher} is a block cipher featuring wave functions in its structure. In order to guarantee an efficient decryption, we propose to use wave functions inside an FN-like framework.
The opposite scenario has been considered in DES \cite{DES} and Picaro \cite{PICARO}, where an expanding linear layer is followed by a compressing confusion layer. 

\paragraph{Algebraic security}
Algebraic attacks might also represent serious threats, as we elaborate further below. It is possible to link some algebraic properties of confusion / diffusion layers and some algebraic weaknesses of the corresponding cipher.
Firstly, in 1975 Coppersmith and Grossman \cite{copp} considered a set of functions
which can be used to define a block cipher and, by studying the permutation
group generated by those, they opened the way to a new branch of research 
focused on group-theoretical properties which can reveal weaknesses of the cipher itself.
As it has been proved in \cite{kalinski}, if such a group is too small, then the cipher is vulnerable
to birthday-paradox attacks. Recently, in \cite{Ca15} the authors proved that if such group is contained in an isomorphic image of the affine group of the message space induced by a hidden sum, then it is possible to embed a dangerous trapdoor on it.  More relevant in \cite{Pat}, Paterson built a DES-like cipher,  resistant to both linear and differential cryptanalysis, whose encryption functions generate an imprimitive group 
and showed how the knowledge of this trapdoor can be turned into an efficient attack to the cipher. 
\ric {For this reason, a branch of research in Symmetric Cryptography is focused on showing that the group generated by the encryption functions of a given cipher is primitive and not of affine type} (see \cite{PriPre, ACDVS, GOST_ric, ca18, ONAN, CDVS09, SW, We1, We3, We2}). In this sense, our purpose is to give sufficient conditions for the primitivity of the group generated by the round functions of a wave cipher. These conditions result naturally from our general investigation of the link between the primitivity of the group generated by the rounds of an SPN and that of an FN. In particular, we prove a general result which links the primitivity of the group generated by the round functions of an FN and the primitivity of the group generated by the rounds of an SPN-like cipher, whose round functions are the ones performed within each round of the FN.\\
\ric{In this paper we aim at proving that it is possible to define a new family of block ciphers, which may feature injective APN S-Boxes of even size, whose round functions generate a primitive group. 
We propose a general framework for block ciphers which produces provably secure ciphers, under some cryptographic assumptions, with respect to the imprimitivity attack. In order to prove the security of the given wave cipher with respect to other classical statistical attacks (e.g. linear and differential cryptanalysis), it is needed to analyse the single instance under consideration.}
\paragraph{Description of the paper}
The paper is organised as follows:
\begin{itemize}
\item  In Section \ref{sec:not} our notation is presented, as well as some basic definitions and results concerning the non-linearity of Boolean functions and primitive permutations group.
In particular, after having presented the main differences between SPNs and FNs, we introduce a notion of \emph{classical round function}, which allows 
to describe formally both cipher families in a unified way, provided the round key is used as a translation (i.e., the key addition is the usual XOR).
\item Section \ref{sec:wavecip} includes our definitions of wave functions and wave ciphers. We also show an example of an APN $4 \times 5$ S-box, which is suitable for building a strong wave function.
\item In Section \ref{sec:security} a group-theoretical result is shown, which, as a consequence, links the
primitivity of the action of an SPN with that of an FN (Theorem \ref{main1}). Thanks to Theorem \ref{main1},
we prove that the group generated by the round functions of a wave cipher
is primitive under some standard cryptographic assumptions on the underlying wave
functions (Theorem \ref{main2}). 
\item In Section \ref{sec:concreteex} it is designed a concrete example of 64-bit wave cipher by selecting an APN $4\times 5$ S-box and a $40\times 32$ diffusion layer, and its resistance against differential and linear cryptanalysis is proved. 
\item Section \ref{sec:concl} concludes the paper and discusses some open problems.
\end{itemize}


\section{Notation and preliminaries}\label{sec:not}
Throughout this paper we use the postfix notation for every function evaluation, i.e. if $f$ is a function and $x$ an element in the domain of $f$, we denote by $xf$ the evaluation of $f$ in $x.$ We denote by $\Imm f $ the range of $f$ and by $Y f^{-1}$ the pre-image of a set $Y$.\\

\noindent A \emph{block cipher} $\Phi$ is a family of key-dependent permutations 
\[\{E_K \mid E_K: \mathcal M \rightarrow \mathcal M, \, K \in \mathcal K \},\]
 where $\mathcal M$ is the message space, $\mathcal K$ the key space, and $|\mathcal M|\leq |\mathcal K|$. The permutation $E_K$ is called the \emph{encryption function induced by the master key} $K$. The block cipher $\Phi$ is called an {iterated block cipher} if there exists $r \in \mathbb N$ such that for each $K \in \mathcal K$ the encryption function $E_K$ is the composition of $r$ {round functions}, i.e. $E_K = \veps_{1,K}\,\veps_{2,K}\ldots\veps_{r,K}$. To provide efficiency, each round function is the composition of a public component provided by the designers, and a private component derived from the user-provided key by means of a public procedure known as \emph{key-schedule}.\\

\noindent In the theory of modern iterated block cipher, two frameworks are mainly considered: Substitution-Permutation Networks (see e.g. AES \cite{AES}, SERPENT \cite{SERPENT}, PRESENT \cite{PRESENT}) and Feistel Networks (see e.g. Camelia \cite{camelia}, GOST \cite{GOST}). Figure \ref{fig.rounds} depicts the more general framework of SPNs, FNs and their round functions; one can note that  inside the round function of  an FN, a function called F-function is applied to a half of the state. In both cases, the principles of confusion and diffusion suggested by Shannon \cite{sha49} are implemented by considering each round function / {F-function} as the composition of key-induced permutation as well as non-linear confusion layers and linear diffusion layers, which are invertible in the case of SPNs and preferably (but not necessarily) invertible in the case of FNs.  We now define a class of round functions for iterated block ciphers which is large enough to include the round functions of well-established SPNs e.g. AES, PRESENT, SERPENT, and the F-function of FNs like Camelia. Notice that, for sake of simplicity, atypical rounds are not considered in this description. \\

\begin{figure}
\centering
\includegraphics[scale = 0.102]{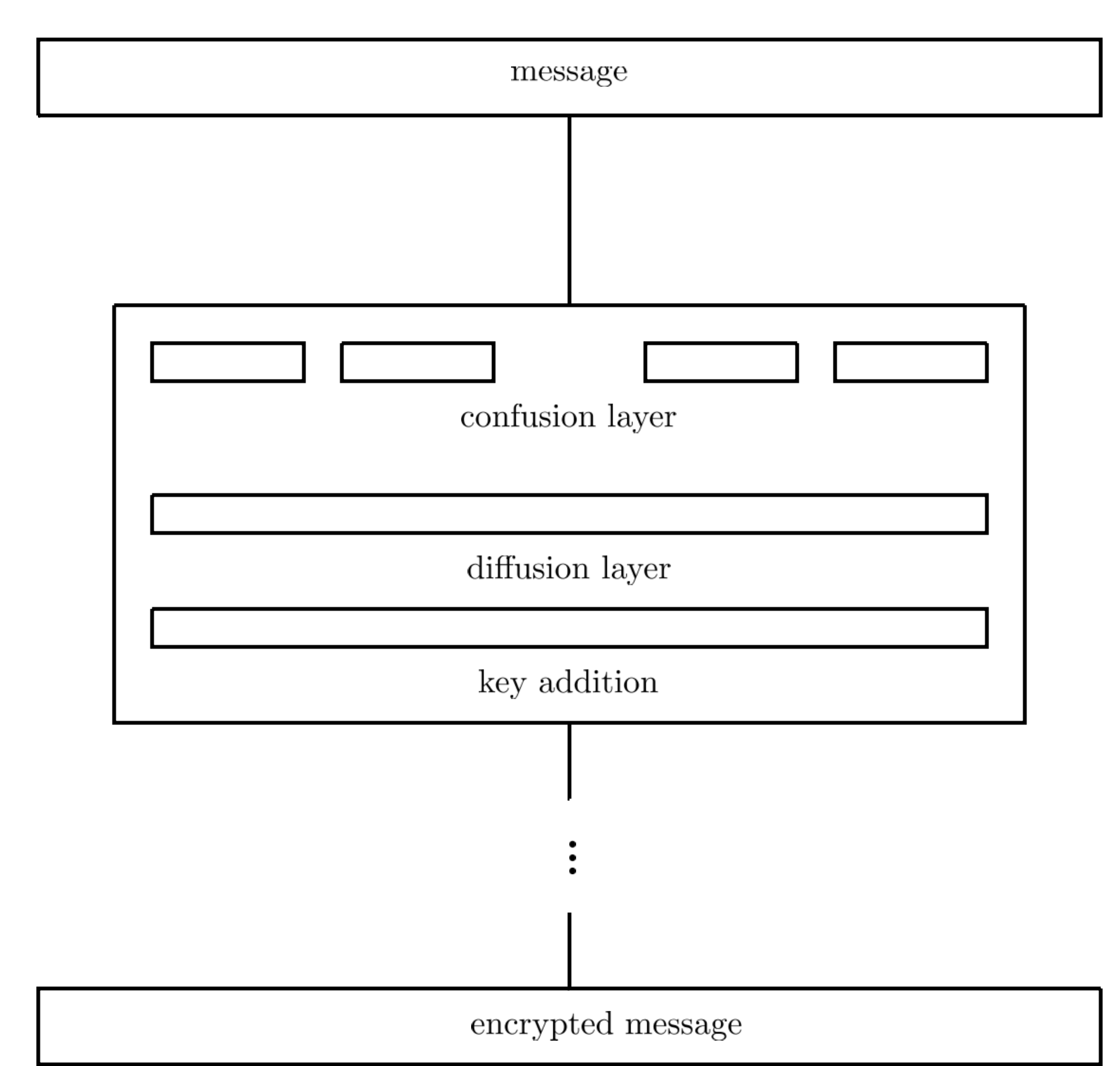}\hfil
\includegraphics[scale= 0.11]{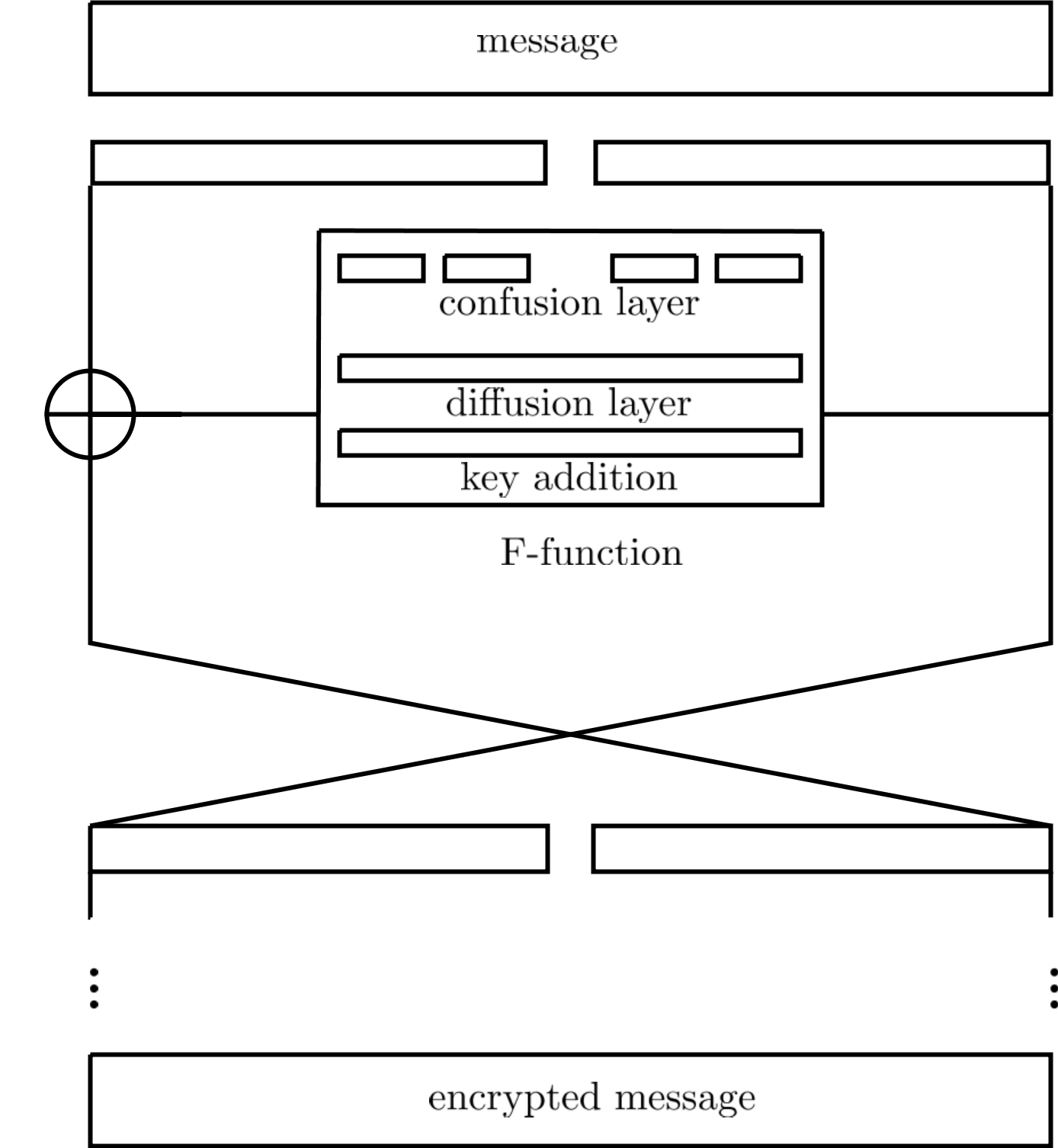}
\caption{Round function of an SPN and of an FN}\label{fig.rounds}
\end{figure}

Let $n\in \mathbb N$ and let us denote $V=(\mathbb F_2)^n$. Let us suppose $\dim(V)=n=bs$ and let us write $V= V_1\oplus V_2\oplus \ldots\oplus V_b$ where  for $1\leq j\leq b$, $\dim(V_j) = s$ and $\oplus$ represents the direct sum of vector subspaces. The subspaces $V_j$'s are called \emph{bricks}. 
We denote by $\sym(V)$ the symmetric group acting on $V$, i.e. the group of all permutations on $V$. Let us also denote by $\AGL(V)$ the group of all affine permutations of $V$, which is a primitive maximal subgroup of $\Sym(V)$.

\begin{definition}
For each $k \in V$, a \emph{classical round function} induced by $k$ is a map $\veps_{k} \in \sym(V)$ 
where $\veps_{k} = \gamma \lambda \sigma_{k}$ and 
\begin{itemize}
\item $\gamma : V \rightarrow V$ is a non-linear permutation (parallel S-box) which acts in parallel way on each $V_{j}$, i.e. \[(x_1,x_2,\ldots,x_n)\gamma = \left((x_1,\ldots,x_{s})\gamma_1,\ldots,(x_{s(b-1)+1},\ldots,x_{n})\gamma_b\right).\] The maps $\gamma_j  :  V_j \rightarrow V_j $ are traditionally called S-boxes,
 \item $\lambda \in \sym(V)$ is a linear map,
 \item  $\sigma_{k}: V \rightarrow V, x \mapsto x+k$ represents the addition with the round key $k$, where $+$ is the usual bitwise \emph{XOR}.
 \end{itemize}
\end{definition}

\noindent When used inside block ciphers, the round keys in $V$ are derived by the designer-provided key-scheduling function from the master key $K \in \mathcal K$. Since, as we will discuss later in detail, studying the role of the key-schedule is out of the scopes of this paper, one can simply suppose that round keys are stochastically  independent randomly-generated vectors in $V$. \\

In modern literature, terms ``SPN" and ``FN" may refer to a very diverse variety of ciphers. For the purposes of this paper we choose to focus only on ciphers with a XOR-based key addition. For this reason, saying SPN we refer to any cipher $\{E_K \mid K \in \mathcal K \} \subseteq \sym(\mathcal M)$ having an SPN-like structure with $\mathcal M = V$ and having classical round functions on $V$ as round functions, and saying FN to any cipher $\{E_K \mid K \in \mathcal K \} \subseteq \sym(\mathcal M)$ having an FN-like structure with  $\mathcal M = V\times V$ and having classical round functions on $V$ as F-functions. Notice that SPNs featuring a XOR-based key addition have been also called \emph{translation-based ciphers} in \cite{CDVS09}.\\

It is well-established that the security from standard statistical attacks comes from the interaction between the high non-linearity of the confusion layer and the avalanche effect guaranteed by the diffusion layer. The following section is a quick overview on one of the most used notions of non-linearity for Boolean functions, which is mainly used to prevent differential cryptanalysis \cite{biham} and other statistical attacks.


\subsection{Notions of non-linearity for Boolean functions}
Let $f:(\F_2)^s \rightarrow (\F_2)^t$ be a vectorial Boolean function and $u \in (\F_2)^s $. The derivative of $f$ in the direction $u$, denoted by $\hat{f}_u$, is the function
\[
\begin{array}{rccc}
\hat{f}_u:&(\F_2)^s&\rightarrow&(\F_2)^t\\
&x&\mapsto&xf + (x+u)f.
\end{array}
\]
\ric{The following definitions can give an estimate of the non-linearity of $f$ (see \cite{nyberg}).} 
\begin{definition}\label{deltadiff}
Let $f:(\F_2)^s \rightarrow (\F_2)^t$, $u \in (\F_2)^s$ and $v \in (\F_2)^t$. Let us define
\[
\delta_f(u,v)\deq|\{x \in (\F_2)^s \mid x\hat{f}_u=v\}| = |\,v \hat{f}_u^{\,-1}|.
\]
The \emph{difference distribution table (DDT)} of $f$  is the integer table  
\[\ddt[u, v]\deq \delta_f(u,v).\]
The \emph{differential uniformity} of $f$ is
\[
\delta(f)\deq\max_{u \ne 0}\ddt[u, v],
\]
and $f$ is said \emph{$\delta${-differentially uniform}} if $\delta=\delta(f)$. 
\end{definition}
\noindent It is well-known that $\delta(f) \geq 2$, and functions reaching the bound $\delta(f) = 2$ are called \emph{almost perfect non-linear (APN)}. Furthermore, it is easy to show that, if  $f$ is $\delta$-differentially uniform, then for each $u \in (\F_2)^s\setminus \{0\}$
\[
|\Imm(\hat{f}_u)|\geq \frac{2^s}{\delta}.
\]
The requirement of Definition \ref{deltadiff} is essentially a condition on the pre-images of the derivatives of $f$. Alternative definitions focused on the images of the derivatives of $f$ has been given e.g. in \cite{canteaut2010,CDVS09}. In particular, a function $f$ satisfying
 \[
|\Imm(\hat{f}_u)| > \frac{2^{\,s-1}}{\delta}
\]
for each $u \in (\F_2)^s \setminus \{0\}$ is called \emph{weakly $\delta$-differentially uniform} \cite{CDVS09}.  It is straightforward to verify that if $f$ is $\delta$-differentially uniform, then it is also weakly $\delta$-differentially uniform. \\


\subsection{Group generated by the round functions}
As already explained in Section \ref{sec:intro}, statistical attacks are just some of the issues that can threaten block ciphers. Several researchers have shown in recent years that also algebraic attacks can be effective. \ric{In this paper we focus on a particular group-theoretical attack, described in \cite{Pat}, based on a undesirable property of the permutation group generated by the round functions of a cipher, the \emph{imprimitivity}}. \\

Let $\Phi = \{E_K \mid K \in \mathcal K \} \subseteq \sym(\mathcal M) $ be an $r$-round iterated block cipher.
We have stressed that the group generated by all encryption functions 
\[
\Gamma(\Phi) \deq \langle E_K \mid K \in \mathcal K\rangle \leq \sym(\mathcal M)
\]
can reveal weaknesses of the cipher. However, the study of $\Gamma(\Phi)$
is not an easy task in general, since it strongly depends on the key-scheduling function (for an example of a key-schedule related study, see \cite{bea}). 
Hence one focuses on a group which is strictly related to $\Gamma(\Phi)$, which allows to ignore the effect of the key-schedule. For this reason, we do not discuss any key-schedule from now on.
Since each permutation $E_K$ is the composition of $r$ round functions $\veps_{1,K},\veps_{2,K} \ldots, \veps_{r,K}$, for each $1 \leq h \leq r$,
it is possible to define the group
\[
	\Gamma_h(\Phi)\deq \langle \veps_{h,K} \mid K \in \mathcal{K} \rangle ,
	\]
where all the possible round keys for round $h$ are considered, and so the group
\[
	\Gamma_{\infty}(\Phi)\deq\langle \Gamma_h(\Phi)\mid 1 \leq h \leq r\rangle.
\]


\subsubsection*{Imprimitive groups}
We recall some basic notions from permutation group theory. Let $G$ be a finite group acting on the set $\M$. For each $g \in G$ and $v \in \M$ we denote the action of $g$ on $v$ as $vg$. We denote by $vG=\{vg \mid g \in G\}$ the orbit of $v \in \M$ and by $G_v=\{g \in G \mid vg=v\}$ its stabiliser.
The group $G$ is said to be \emph{transitive} on $\M$ if for each $v,w \in \M$ there exists $g \in G$ such that $vg=w$.
A partition $\mathcal{B}$ of $\M$ is \emph{trivial} if $\mathcal{B}=\{\M\}$ or $\mathcal{B}=\{\{v\} \mid v \in \M\}$, and \emph{$G$-invariant} if for any $B \in \mathcal{B}$ and $g \in G$ it holds $Bg \in \mathcal{B}$. Any non-trivial and $G$-invariant partition $\mathcal{B}$ of $\M$ is called a \emph{block system}. In particular any $B \in \mathcal{B}$ is called an \emph{imprimitivity block}. The group $G$ is \emph{primitive} in its action on $\M$ (or $G$ \emph{acts primitively} on $\M$) if {$G$ is transitive and} there exists no block system. 
Otherwise,  the group $G$ is \emph{imprimitive} in its action on $\M$ (or $G$ \emph{acts imprimitively} on $\M$). We recall the following well-known results which will be useful in the remainder of the paper, and whose proofs may be found e.g. in \cite{Cam}. 

\begin{lemma}\label{lemma:block}
  A block of imprimitivity is the orbit $v  H$ of a proper subgroup  $H < G$ that properly  contains  the
  stabiliser $G_v$,  for some $v  \in \M$.
\end{lemma}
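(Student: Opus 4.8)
The plan is to unpack the statement into the two facts it packages: that every imprimitivity block equals an orbit $vH$ with $G_v < H < G$, and conversely that every such orbit is an imprimitivity block. Throughout I use transitivity of $G$ on $\M$, which is built into the notion of imprimitivity.

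First I would do the forward direction. Given a block system $\mathcal B$ and a block $B \in \mathcal B$, fix $v \in B$ and set $H \deq \{g \in G \mid Bg = B\}$, the setwise stabiliser of $B$. The inclusion $G_v \le H$ is immediate: if $vg = v$ then $v \in B \cap Bg$, and since $\mathcal B$ is a partition with $Bg \in \mathcal B$, this forces $Bg = B$. For $B = vH$: clearly $vH \subseteq B$ since elements of $H$ fix $B$ setwise and $v \in B$; conversely, given $w \in B$, transitivity yields $g \in G$ with $vg = w$, and $w \in B \cap Bg$ again forces $Bg = B$, i.e. $g \in H$, so $w \in vH$. Properness of both inclusions then follows from non-triviality of $\mathcal B$: if $G_v = H$ then $B = vG_v = \{v\}$, and if $H = G$ then $B = vG = \M$, both of which are excluded.

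Next I would prove the converse. Given $G_v < H < G$, put $B \deq vH$ and $\mathcal B \deq \{Bg \mid g \in G\}$. The crucial step — the one that genuinely uses $G_v \le H$ — is showing that two translates $Bg_1, Bg_2$ that meet must coincide. If $Bg_1 \cap Bg_2 \ne \emptyset$ then $B \cap Bg \ne \emptyset$ for $g \deq g_2 g_1^{-1}$, so $vh_1 = vh_2 g$ for some $h_1, h_2 \in H$; hence $h_2 g h_1^{-1} \in G_v \le H$, so $g \in H$, so $Bg = vHg = vH = B$ and therefore $Bg_2 = Bg_1$. Thus $\mathcal B$ is a partition of $\M$ (it covers $\M$ by transitivity) and it is obviously $G$-invariant. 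Non-triviality follows from the two strict inclusions: choosing $h \in H \setminus G_v$ gives $vh \ne v$, so $|B| \ge 2$; and orbit–stabiliser gives $|B| = [H : G_v] < [G : G_v] = |\M|$ because $G_v \le H < G$ forces $[G:H] \ge 2$, so $B \ne \M$. Hence $\mathcal B$ is a block system with $B$ an imprimitivity block.

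The main obstacle is precisely the disjointness dichotomy in the converse: one must carefully convert ``the two translates overlap'' into ``$g$ lies in $H$'', which works only because $G_v \le H$, and then observe that $g \in H$ yields $Bg = B$ exactly because $B = vH$ is stable under right multiplication by $H$. Everything else is routine bookkeeping with orbit–stabiliser together with the translation of non-triviality of the partition into the two strict inclusions $G_v < H$ and $H < G$.
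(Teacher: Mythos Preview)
Your proof is correct and is precisely the standard textbook argument. Note, however, that the paper does not give its own proof of this lemma: it is stated as one of three well-known facts whose proofs ``may be found e.g.\ in~\cite{Cam}'' (Cameron's \emph{Permutation Groups}), so there is nothing in the paper to compare against. Your argument is essentially the one found in Cameron, so in that sense you match the intended source.
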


\begin{lemma}\label{lemma:trans}
  If $T$ is a transitive subgroup of $G$, then a block system
  for $G$ is also a block system for $T$.
\end{lemma}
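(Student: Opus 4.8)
The plan is to unwind the definition of a block system and observe that both of its defining properties --- being a non-trivial partition of $\M$, and being invariant under the acting group --- pass from $G$ to any subgroup of $G$ automatically; transitivity of $T$ will enter only to make the conclusion a meaningful statement about imprimitivity of $T$.

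Concretely, I would fix a block system $\mathcal{B}$ for $G$, that is, a partition of $\M$ with $\mathcal{B}\neq\{\M\}$ and $\mathcal{B}\neq\{\{v\}\mid v\in\M\}$, such that $Bg\in\mathcal{B}$ for every $B\in\mathcal{B}$ and every $g\in G$. The triviality condition refers only to $\M$ and to the partition itself, so $\mathcal{B}$ remains a non-trivial partition of $\M$ irrespective of which group we regard as acting on it. For invariance, since $T\le G$ every $t\in T$ is in particular an element of $G$, hence $Bt\in\mathcal{B}$ for all $B\in\mathcal{B}$ and $t\in T$; thus $\mathcal{B}$ is $T$-invariant. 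Combining the two observations, $\mathcal{B}$ is a non-trivial $T$-invariant partition of $\M$, i.e. a block system for $T$.

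The only point deserving comment is the hypothesis that $T$ is transitive, which the argument above does not actually use: it is there because a group is declared imprimitive only when it is transitive \emph{and} admits a block system, so transitivity is precisely what upgrades ``$\mathcal{B}$ is a block system for $T$'' to ``$T$ acts imprimitively''. As a bonus, transitivity of $T$ forces $T$ to permute the members of $\mathcal{B}$ transitively --- given $B,B'\in\mathcal{B}$, choosing $v\in B$, $w\in B'$ and $t\in T$ with $vt=w$ yields $Bt=B'$ --- so that all imprimitivity blocks share a common size. I expect no genuine obstacle: the lemma is a direct formal consequence of the definition of block system together with the inclusion $T\le G$, and its later purpose is exactly to descend statements about $\Gamma_{\infty}(\Phi)$ to the transitive subgroups $\Gamma_h(\Phi)$.
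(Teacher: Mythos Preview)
Your argument is correct: $G$-invariance of the partition $\mathcal{B}$ immediately restricts to $T$-invariance because $T\le G$, while non-triviality of $\mathcal{B}$ is a property of the partition alone, so $\mathcal{B}$ is a block system for $T$ by definition. The paper does not supply its own proof of this lemma but simply cites Cameron's textbook, so there is nothing to compare against; your one-line definitional check is exactly the standard justification. One small contextual slip: the lemma is later invoked not to descend from $\Gamma_{\infty}(\Phi)$ to the $\Gamma_h(\Phi)$, but to pass a putative block system from $\overline\Gamma=\langle T_{(n,n)},\overline\rho\rangle$ down to its transitive translation subgroup $T_{(n,n)}$ (Lemma~\ref{rem1}), so that the blocks can be identified with cosets of a subgroup of $V\times V$.
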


\begin{lemma}\label{translatioBlocks}
Let us assume that $\M$ is a finite vector space over $\mathbb F_2$ and $T$ its translation group, i.e. $T = \{\sigma_v \mid  \sigma_v: \M \rightarrow \M, \:x \mapsto x+v, v \in \M\}$. 
The group $T$ is transitive and imprimitive on $\M$. Moreover, for any proper and non-trivial subgroup $U$ of $(\M , +)$,
$\{U+v \mid v \in \M\}$ is a block system.
 \end{lemma}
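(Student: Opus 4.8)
The plan is to verify the three assertions directly, without appealing to the structure theory of primitive groups, since everything reduces to elementary coset bookkeeping.

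\textbf{Transitivity.} Given $x,y\in\M$, put $v=x+y$ and consider $\sigma_v\in T$. Working in characteristic $2$ we get $x\sigma_v=x+v=x+x+y=y$, so $T$ is transitive on $\M$ (indeed regular, since the $v$ with $x\sigma_v=y$ is unique). This establishes the transitivity claim, and it is the only ingredient needed before discussing (im)primitivity.

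\textbf{The coset partition is a block system.} Fix a proper, non-trivial subgroup $U$ of $(\M,+)$ and set $\mathcal{B}=\{U+v\mid v\in\M\}$, the collection of additive cosets of $U$. Being the cosets of a subgroup, the members of $\mathcal{B}$ partition $\M$. For non-triviality I would argue the two cases separately, and this is precisely where the hypotheses on $U$ are used: since $U\neq\{0\}$ every coset $U+v$ has at least two elements, so $\mathcal{B}\neq\{\{v\}\mid v\in\M\}$; since $U\neq\M$ there is more than one coset, so $\mathcal{B}\neq\{\M\}$. For $T$-invariance, take any $\sigma_w\in T$; then
\[
(U+v)\sigma_w=\{u+v+w\mid u\in U\}=U+(v+w)\in\mathcal{B}.
\]
Hence $\mathcal{B}$ is a non-trivial $T$-invariant partition, i.e. a block system, and $T$ is imprimitive on $\M$ (this of course presupposes that a proper non-trivial $U$ exists, i.e. $\dim_{\F_2}\M\geq 2$, which is the only situation relevant here). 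One could alternatively deduce the existence of a block from Lemma \ref{lemma:block}, taking for $H$ the subgroup $\{\sigma_u\mid u\in U\}$, which strictly contains the (trivial) stabiliser of $0$ and is strictly contained in $T$; but the explicit description of $\mathcal{B}$ is more informative and is what is used later.

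\textbf{Main obstacle.} There is essentially no obstacle: the statement is routine once one recognises that translations permute the cosets of any additive subgroup. The only point demanding a moment's care is the verification that $\mathcal{B}$ is non-trivial, where both the properness and the non-triviality of $U$ are genuinely needed; everything else is a one-line computation.
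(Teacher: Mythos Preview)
Your argument is correct: transitivity is immediate from the regularity of the translation action, and the verification that cosets of a proper non-trivial subgroup form a non-trivial $T$-invariant partition is exactly the right elementary computation. The paper does not actually supply its own proof of this lemma; it records it as a well-known fact and refers the reader to Cameron's textbook~\cite{Cam}, so there is nothing further to compare against.
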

 
 
\subsubsection*{Imprimivity attack}

\ric{\noindent The cryptanalysts' interest into the imprimitivity of the group generated by the round functions of a block cipher arise from the study performed in \cite{Pat}, where it is showed how the imprimitivity of the group can be exploited to construct a trapdoor that may be hard to detect. In particular, the author gave an example of a DES-like cipher, which can be easily broken since its round functions generate an imprimitive group, but which is resistant to both linear and differential cryptanalysis.}


\section{Wave ciphers}\label{sec:wavecip}
The aim of this section is to define ciphers whose inner layers are not necessarily invertible, in order to use APN vectorial Boolean functions as S-boxes (even when the S-box input size is four or eight). We focus on the case of wave-shaped round functions, which feature a first layer which enlarges the state, a second which reduces its size, and a key addition. These round functions are employed in the place of classical round functions for both SPNs and FNs. To do so, let us recall that  $n=bs\in \mathbb N$ and $V=(\mathbb F_2)^n$, where $V= V_1\oplus V_2\oplus \ldots\oplus V_b$,  for $1\leq j\leq b$, and $\dim(V_j) = s$. Let us define an auxiliary space $W=(\mathbb F_2)^m$, with $n \leq m$ such that $\dim(W)=m=bt$ and $W= W_1\oplus W_2\oplus \ldots\oplus W_b$. The subspaces $W_j$'s, as the subspaces $V_j$'s, are called bricks. \\

What follows is a generalisation of the concept of classical round function.

\begin{definition}
For each $k \in V$, the \emph{wave function} induced by $k$ is a map $\veps_{k} : V \rightarrow V$,
where $\veps_{k} = \gamma \lambda \sigma_{k}$ and 
\begin{itemize}
\item $\gamma : V \rightarrow W$ is an injective non-linear transformation (parallel S-box) which acts in parallel way on each $V_{j}$, i.e. \[(x_1,x_2,\ldots,x_n)\gamma = \left((x_1,\ldots,x_{s})\gamma_1,\ldots,(x_{s(b-1)+1},\ldots,x_{n})\gamma_b\right).\] The maps $\gamma_j  :  V_j \rightarrow W_j $ are called S-boxes;
\item $\lambda : W \rightarrow V$ is a surjective linear map; 
\item $\sigma_{k}: V \rightarrow V, x \mapsto x+k$ is the round key addition. 
\end{itemize}
\end{definition}
\noindent Figure \ref{fig.wave2} depicts the composition of two consecutive wave functions. \\

\begin{figure}
\begin{center}
 \includegraphics[scale=0.14]{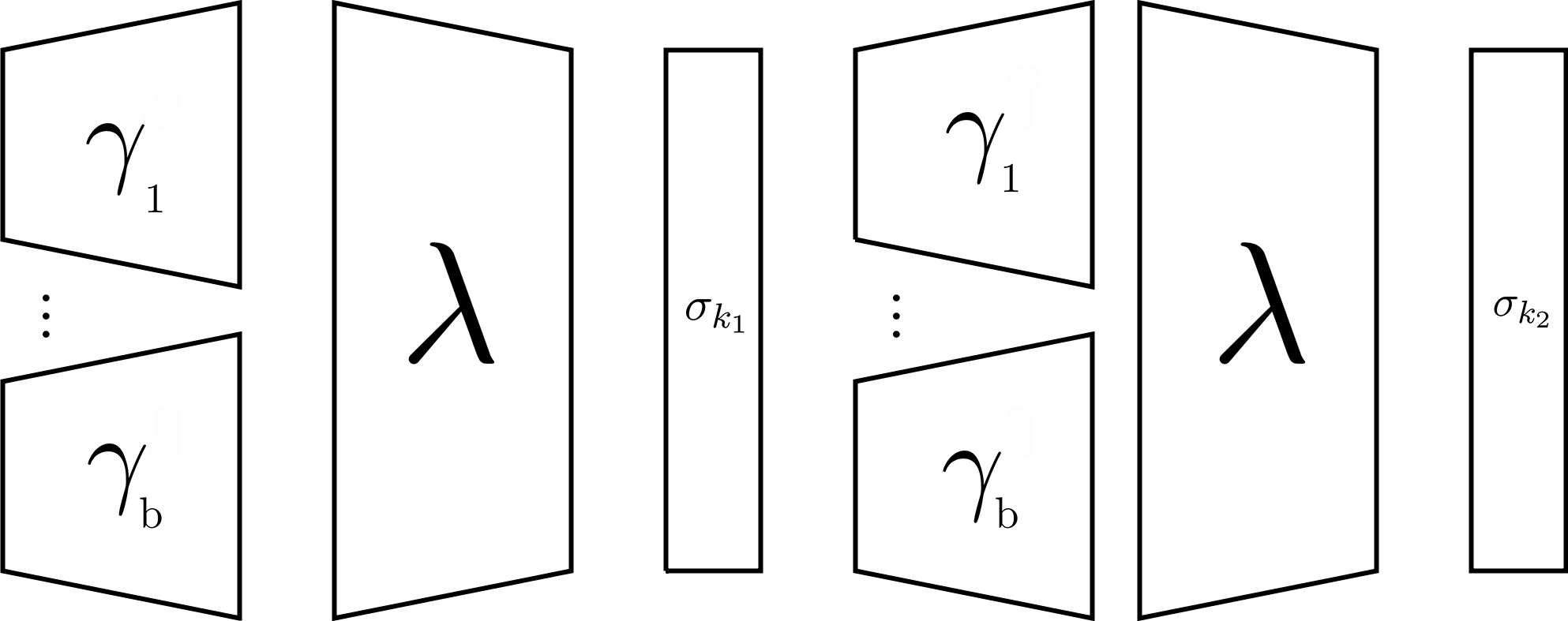}
\end{center}
\caption{Wave functions}
\label{fig.wave2}
\end{figure}

Notice that, although the hypothesis of each layer being singularly invertible may be relaxed, decryption is granted only if each wave function is overall invertible. 
The following result gives a condition on the confusion and diffusion layers which ensures that a wave function is a permutation. 

\begin{lemma}\label{perm_cond}
Let $\veps_{k} = \gamma\lambda \sigma_k$ be a wave  function. The following are equivalent: 
\begin{enumerate}
\item \label{cond_one}$\{a+b \mid a,b \in \Imm\gamma\}\cap\Ker\la = \{0\}$;
\item $\veps_k \in \sym(V)$.
\end{enumerate}
\end{lemma}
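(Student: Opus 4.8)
The plan is to reduce the statement to the injectivity of the composite map $\gamma\lambda : V \to V$. Since $\sigma_k$ is a translation, it is always a bijection of $V$, so $\veps_k = \gamma\lambda\sigma_k$ lies in $\sym(V)$ if and only if $\gamma\lambda$ is a bijection of $V$; and because $V$ is finite, $\gamma\lambda$ is a bijection if and only if it is injective. Thus the lemma is equivalent to the assertion that $\gamma\lambda$ is injective if and only if condition (\ref{cond_one}) holds.

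Next I would unwind injectivity explicitly in postfix notation. For $x, y \in V$ one has $x\gamma\lambda = (x\gamma)\lambda = (y\gamma)\lambda = y\gamma\lambda$ if and only if $(x\gamma + y\gamma)\lambda = 0$, using linearity of $\lambda$ and characteristic $2$; that is, if and only if $x\gamma + y\gamma \in \Ker\lambda$. Writing $a = x\gamma$ and $b = y\gamma$, the pair $(a,b)$ ranges exactly over $\Imm\gamma \times \Imm\gamma$ as $(x,y)$ ranges over $V \times V$. Moreover, since $\gamma$ is injective, $x \ne y$ is equivalent to $a \ne b$. Hence the existence of a nontrivial collision $x\gamma\lambda = y\gamma\lambda$ with $x \ne y$ is precisely the existence of $a, b \in \Imm\gamma$ with $a \ne b$ and $a + b \in \Ker\lambda$.

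From this reformulation both implications follow directly. If (\ref{cond_one}) holds, then any element of $\Ker\lambda$ that can be written as $a + b$ with $a,b \in \Imm\gamma$ must be $0$, forcing $a = b$ and hence $x = y$; so $\gamma\lambda$ has no nontrivial collisions and is injective, giving $\veps_k \in \sym(V)$. Conversely, if $\veps_k \in \sym(V)$, then $\gamma\lambda$ is injective; given any $z \in \{a+b \mid a,b \in \Imm\gamma\} \cap \Ker\lambda$, write $z = x\gamma + y\gamma$ for suitable $x,y \in V$, and $z \in \Ker\lambda$ yields $x\gamma\lambda = y\gamma\lambda$, hence $x = y$ and $z = 0$; so the intersection is $\{0\}$.

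I do not expect any genuine obstacle here; the proof is a short unwinding of definitions. The only points requiring care are purely bookkeeping: $\Imm\gamma$ is not a subspace of $W$ (as $\gamma$ is non-linear), so the set $\{a+b \mid a,b \in \Imm\gamma\}$ must be handled as a sumset and never silently replaced by $\Span{\Imm\gamma}$; and one should keep the postfix composition order explicit, writing $x\gamma\lambda = (x\gamma)\lambda$, so that linearity of $\lambda$ is applied to the correct argument.
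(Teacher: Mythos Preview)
Your proof is correct and follows essentially the same approach as the paper: both reduce the question to showing that a collision $x\gamma\lambda = y\gamma\lambda$ is equivalent to $x\gamma + y\gamma$ being a nonzero element of $\{a+b \mid a,b \in \Imm\gamma\}\cap\Ker\lambda$, using linearity of $\lambda$ and injectivity of $\gamma$. The only cosmetic difference is that you explicitly factor out the bijection $\sigma_k$ and the finiteness of $V$ at the start, whereas the paper works directly with $\veps_k$.
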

\begin{proof}
Let us assume \ref{cond_one}. Let $x_1, x_2 \in V$ such that $x_1 \veps_k = x_2 \veps_k$. Then $(x_1\gamma + x_2\gamma)\la = 0$, so 
$x_1\gamma + x_2\gamma  \in \{a+b \mid a,b \in \Imm\gamma\}\cap\Ker\la = \{0\}$, and hence $x_1\gamma =  x_2\gamma$. Since $\gamma$ is injective, it follows $x_1 = x_2$. Conversely, let $x \in \{a+b \mid a,b \in \Imm\gamma\}\cap\Ker\la$. Then there exist $x_1, x_2 \in V$ such that $x = x_1\gamma + x_2 \gamma$ and $x\la = 0$, that is $x_1 \gamma\la = x_2 \gamma\la$. Therefore $x_1 \veps_k = x_2 \veps_k$ and hence $x_1 = x_2$, which implies $x = 0$. 
\end{proof}
\begin{remark}\label{remdisj}
\emph{
Notice that it always holds $0 \in \{a+b \mid a,b \in \Imm\gamma\}\cap\Ker\la$.
Moreover, notice that if we assume that $0 \gamma = 0$, then the first condition of the previous lemma implies that $\Imm\gamma \cap \Ker\la = \{0\}.$
}
\end{remark}
\subsection{Using a 4x5 APN function}\label{sec:exa}
The function $\gamma_1: (\F_2)^4 \rightarrow (\F_2)^5$ displayed in Figure \ref{sbox} represents an example of a 4x5 injective function, which is APN, as it can be noted looking at its DDT displayed in Table \ref{tab:ddt} in the last page of this paper. Each vector is interpreted as a binary number, most significant bit first, and then represented using the hexadecimal notation (e.g. $(0,0,0,1) = \hex{1}$).  With an eye on using this function as an S-box for a wave function, one has to verify that there exists a diffusion layer satisfying the hypothesis of Lemma  \ref{perm_cond}. 
It holds $\Imm(\gamma_1) \subset (\F_2)^5$; moreover it is easy to check that $|\{a+b \mid a,b \in \Imm(\gamma_1)\}|= 31$, and the missing vector in $(\F_2)^5$ is $\xi \deq \hex{11}$. A possible way to design a cipher whose confusion layer applies in parallel $b$ copies of the S-box $\gamma_1$ is to determine a diffusion layer $\la$ whose null space is $\spann_{\F_2}\left\{(\xi,0,\ldots,0), (0,\xi,0,\ldots, 0), \ldots, (0,0,\ldots, \xi)\right\}$, where $0$ denotes the zero vector in $(\F_2)^5$. The hypothesis \ref{cond_one} of Lemma \ref{perm_cond} is satisfied, hence all the produced wave functions are bijective. Such a diffusion layer features a \emph{parallel} kernel, i.e. $$\Ker\la = \bigoplus_{j = 1}^{b} \Ker\la \cap W_j.$$ This important feature will be also exploited in the following sections.\\ 
\noindent Notice that it is not hard to find examples of such APN functions. Indeed,  it is possible to construct an APN map $\gamma:(\mathbb{F}_2)^{n}\to(\mathbb{F}_2)^{n+1}$ by considering first a function defined over $(\mathbb{F}_2)^{n}$ and then extending its image to $(\mathbb{F}_2)^{n+1}$ by adding an extra bit.
Otherwise it is possible to embed $(\mathbb{F}_2)^{n}$ into $(\mathbb{F}_2)^{n+1}$ and then consider an APN map defined over $(\mathbb{F}_2)^{n+1}$.
The map $\gamma_1$ has been  obtained using the first approach on the power function $x \mapsto x^{-1}$.\\ 
\begin{figure}
\centering
\footnotesize
\[
\begin{array}{c||c|c|c|c|c|c|c|c|c|c|c|c|c|c|c|c}
  x &  \hex{0} & \hex{1} & \hex{2} & \hex{3} & \hex{4} & \hex{5} & \hex{6} & \hex{7} & \hex{8} & \hex{9} & \hex{A} & \hex{B} & \hex{C} & \hex{D} & \hex{E} & \hex{F} \\
  \hline
  x\gamma_1 & \hex{0} & \hex{B} & \hex{1B} & \hex{8} & \hex{1D} & \hex{17} & \hex{12}  & \hex{4} & \hex{D} & \hex{14} & \hex{1} & \hex{1E} & \hex{18} & \hex{2} & \hex{E} & \hex{7}
 \end{array}
\]
\caption{A 4x5 APN S-box}
\label{sbox}
 \end{figure}
 
\setlength{\arraycolsep}{1.2pt}
\begin{sidewaystable}[htb]
\[
\begin{array}{c:cccccccccccccccccccccccccccccccc}
& \hex{\wt{0}0} & \hex{\wt{0}1} & \hex{\wt{0}2} & \hex{\wt{0}3} & \hex{\wt{0}4} & \hex{\wt{0}5} & \hex{\wt{0}6} & \hex{\wt{0}7} & \hex{\wt{0}8} & \hex{\wt{0}9} & \hex{\wt{0}A} & \hex{\wt{0}B} & \hex{\wt{0}C} & \hex{\wt{0}D} & \hex{\wt{0}E} & \hex{\wt{0}F} & \hex{10} & \hex{11} & \hex{12} & \hex{13} & \hex{14} & \hex{15} & \hex{16} & \hex{17} & \hex{18} & \hex{19} & \hex{1A} & \hex{1B} & \hex{1C} & \hex{1D} & \hex{1E} &\hex{1F} \\
\hdashline
\hex{0} & 16 & \cdot & \cdot & \cdot & \cdot & \cdot & \cdot & \cdot & \cdot & \cdot & \cdot & \cdot & \cdot & \cdot & \cdot & \cdot & \cdot & \cdot & \cdot & \cdot & \cdot & \cdot & \cdot & \cdot & \cdot & \cdot & \cdot & \cdot & \cdot & \cdot & \cdot & \cdot \\ 
\hex{1} & \cdot & \cdot & \cdot & \cdot & \cdot & \cdot & \cdot & \cdot & \cdot & 2 & 2 & 2 & \cdot & \cdot & \cdot & \cdot & \cdot & \cdot & \cdot & 2 & \cdot & \cdot & 2 & \cdot & \cdot & 2 & 2 & \cdot & \cdot & \cdot & \cdot & 2 \\ 
\hex{2} & \cdot & \cdot & \cdot & 2 & \cdot & 2 & \cdot & \cdot & \cdot & \cdot & 2 & \cdot & 2 & \cdot & \cdot & 2 & \cdot & \cdot & \cdot & 2 & \cdot & \cdot & 2 & \cdot & \cdot & \cdot & \cdot & 2 & \cdot & \cdot & \cdot & \cdot \\ 
\hex{3} & \cdot & \cdot & \cdot & \cdot & \cdot & 2 & \cdot & \cdot & 2 & \cdot & \cdot & \cdot & 2 & \cdot & \cdot & \cdot & 2 & \cdot & \cdot & 2 & \cdot & 2 & \cdot & \cdot & \cdot & 2 & \cdot & \cdot & \cdot & \cdot & \cdot & 2 \\ 
\hex{4} & \cdot & \cdot & \cdot & \cdot & \cdot & \cdot & \cdot & \cdot & \cdot & 2 & \cdot & \cdot & 2 & \cdot & \cdot & 2 & \cdot & \cdot & \cdot & \cdot & \cdot & 2 & 2 & \cdot & \cdot & 2 & \cdot & \cdot & 2 & 2 & \cdot & \cdot \\ 
\hex{5} & \cdot & \cdot & \cdot & \cdot & \cdot & \cdot & 2 & \cdot & \cdot & \cdot & \cdot & \cdot & 2 & \cdot & \cdot & 2 & 2 & \cdot & \cdot & \cdot & \cdot & \cdot & 2 & 2 & \cdot & \cdot & 2 & \cdot & \cdot & \cdot & \cdot & 2 \\ 
\hex{6} & \cdot & \cdot & \cdot & 2 & \cdot & \cdot & 2 & \cdot & \cdot & \cdot & \cdot & \cdot & \cdot & \cdot & \cdot & 2 & \cdot & \cdot & 2 & 2 & \cdot & \cdot & \cdot & \cdot & \cdot & 2 & \cdot & \cdot & 2 & \cdot & \cdot & 2 \\ 
\hex{7} & \cdot & \cdot & \cdot & 2 & 2 & \cdot & 2 & \cdot & \cdot & \cdot & 2 & \cdot & 2 & \cdot & \cdot & \cdot & \cdot & \cdot & \cdot & \cdot & \cdot & 2 & \cdot & \cdot & \cdot & 2 & 2 & \cdot & \cdot & \cdot & \cdot & \cdot \\ 
\hex{8} & \cdot & \cdot & \cdot & 2 & \cdot & 2 & \cdot & \cdot & \cdot & \cdot & \cdot & \cdot & \cdot & 2 & \cdot & \cdot & \cdot & \cdot & \cdot & \cdot & \cdot & 2 & 2 & \cdot & \cdot & \cdot & 2 & \cdot & 2 & \cdot & \cdot & 2 \\ 
\hex{9} & \cdot & \cdot & \cdot & \cdot & \cdot & 2 & 2 & \cdot & \cdot & 2 & 2 & \cdot & \cdot & \cdot & \cdot & 2 & \cdot & \cdot & \cdot & \cdot & 2 & 2 & \cdot & \cdot & \cdot & \cdot & \cdot & \cdot & \cdot & \cdot & \cdot & 2 \\ 
\hex{A} & \cdot & 2 & \cdot & \cdot & \cdot & \cdot & 2 & \cdot & \cdot & \cdot & 2 & \cdot & \cdot & \cdot & \cdot & \cdot & 2 & \cdot & \cdot & 2 & \cdot & 2 & 2 & \cdot & \cdot & \cdot & \cdot & \cdot & 2 & \cdot & \cdot & \cdot \\ 
\hex{B} & \cdot & \cdot & \cdot & \cdot & \cdot & 2 & \cdot & \cdot & \cdot & \cdot & 2 & \cdot & \cdot & \cdot & \cdot & 2 & 2 & \cdot & \cdot & \cdot & \cdot & \cdot & \cdot & \cdot & \cdot & 2 & 2 & \cdot & 2 & \cdot & 2 & \cdot \\ 
\hex{C} & \cdot & \cdot & \cdot & 2 & \cdot & \cdot & \cdot & \cdot & \cdot & 2 & \cdot & \cdot & \cdot & \cdot & \cdot & 2 & 2 & \cdot & \cdot & 2 & \cdot & 2 & \cdot & \cdot & 2 & \cdot & 2 & \cdot & \cdot & \cdot & \cdot & \cdot \\ 
\hex{D} & \cdot & \cdot & 2 & \cdot & \cdot & 2 & 2 & \cdot & \cdot & 2 & \cdot & \cdot & 2 & \cdot & \cdot & \cdot & \cdot & \cdot & \cdot & 2 & \cdot & \cdot & \cdot & \cdot & \cdot & \cdot & 2 & \cdot & 2 & \cdot & \cdot & \cdot \\ 
\hex{E} & \cdot & \cdot & \cdot & 2 & \cdot & \cdot & \cdot & \cdot & \cdot & 2 & 2 & \cdot & 2 & \cdot & 2 & \cdot & 2 & \cdot & \cdot & \cdot & \cdot & \cdot & \cdot & \cdot & \cdot & \cdot & \cdot & \cdot & 2 & \cdot & \cdot & 2 \\ 
\hex{F} & \cdot & \cdot & \cdot & 2 & \cdot & 2 & 2 & 2 & \cdot & 2 & \cdot & \cdot & \cdot & \cdot & \cdot & \cdot & 2 & \cdot & \cdot & \cdot & \cdot & \cdot & 2 & \cdot & \cdot & 2 & \cdot & \cdot & \cdot & \cdot & \cdot & \cdot 
\end{array}
\]
\vspace*{7mm}\caption{Difference distribution table of the S-box $\gamma_1$ defined in Section \ref{sec:exa}}
 \label{tab:ddt}
 \vspace{-13cm}
\end{sidewaystable}

\subsection{Feistel Networks with wave functions}
Since our goal is to use the previously defined wave functions inside a cipher, we now define a \emph{wave cipher} as an FN whose F-function is a wave function. Feistel Network's straightforward decryption encourages this choice. \\

Before defining wave ciphers, we generalise a standard security requirement for diffusion layers \cite{CDVS09} to the case of surjective maps. 
\begin{definition}\label{propMix}
A \emph{wall} of V (resp. W) is any non-trivial and proper sum of bricks of $V$ (resp. $W$). A surjective linear transformation $\lambda:W \longrightarrow V$ is a \emph{proper diffusion layer} if for any wall $W'= \bigoplus_{j \in I}W_j$ of $W$ and $V'=\bigoplus_{j \in I}V_j$ of $V$, where $I \subset \{1, \ldots, b\}$, then  
	\[
	V'\lambda^{-1} \not\subset W'+\Ker\lambda.	
	\]
\end{definition}

\noindent In other terms, if $\pi: W \longrightarrow W/\Ker\la$ is the \textit{canonical projection of} $W$ onto $W/{\Ker}(\lambda)$, $\lambda$ is proper if there exists no wall $W'= \bigoplus_{j\in I}W_j$ of $W$ and $V'=\bigoplus_{j \in I}V_j$ of $V$ such that $W'\pi\la =V'$. \\

We are now ready to define our new class of block ciphers, having \linebreak $\M = V\times V$ as message space. In what follows, $0_n$ and $1_n$ denote the zero matrix of size $n\times n$ and the identity matrix of size $n$ respectively. Moreover, for any given function $f: (\F_2)^n \rightarrow (\F_2)^n$, we denote by $\overline f$ the formal operator $\overline f : (\F_2)^{2n} \rightarrow (\F_2)^{2n}$ \[\overline{f} \,\deq \,\, \begin{pmatrix}
0_n & 1_n \\
1_n & f
\end{pmatrix},\]
such that  for any $(x_1, x_2) \in (\F_2)^n \times (\F_2)^n$ acts as $(x_1, x_2)\overline f = (x_2, x_1+x_2f)$.
The latter is called the \emph{Feistel operator induced by $f$} and, as we will discuss further, allows to give an algebraic description of FNs.
\begin{definition}\label{waveDef}
An $r$-round wave cipher $\Phi$ is a family of encryption functions $\{E_K \mid K \in \mathcal K\} \subseteq \sym(V\times V)$  such that  for each $K \in \mathcal K$ the map $E_K$ is the composition of $r$ functions. More precisely
$E_K = \overline{\veps_{1,K}}\:\overline{\veps_{2,K}}\ldots\overline{\veps_{r,K}}$,
where 
$\veps_{i,K} = \gamma\la\sigma_{k_i}$ is an $n$-bit wave  function such that
\begin{itemize}
\item $\la$ is a proper diffusion layer,
\item  the key-schedule $\mathcal{K} \rightarrow V^{r}$, $K \mapsto (k_1,k_2,\ldots,k_r)$, is surjective w.r.t. any round.
\end{itemize} 
The function $\rho \deq \gamma\la$ is called the \emph{generating function} of the cipher. 
\end{definition}
\noindent Let us notice that the ciphers previously introduced are FNs featuring a wave function as F-function. Indeed, given $(x_1,x_2) \in V\times V$ one has 
\[(x_1,x_2)\overline{\veps_{i,K}} = (x_1,x_2) \begin{pmatrix}
0_n & 1_n \\
1_n & \veps_{i,K}
\end{pmatrix} = (x_2, x_1+x_2\veps_{i,K}),\] 
where the operator $\overline{\veps_{i,K}}$ induces the Feistel structure, as shown in Figure \ref{fig.fei}. 
Moreover $\overline{\veps_{i,K}}$ is invertible with the following inverse
\[
\overline{\veps_{i,K}}^{\,-1}  = \begin{pmatrix}
\veps_{i,K} & 1_n \\
1_n & 0_n
\end{pmatrix}.
\] 
It is indeed an easy check that 
\[
(x_2, x_1+x_2\veps_{i,K}) \begin{pmatrix}
\veps_{i,K} & 1_n \\
1_n & 0_n
\end{pmatrix} = (x_1, x_2).
\]
Note that, as  for any FN, the inverse $\overline{\veps_{i,K}}^{\, -1}$ of the round function $\overline{\veps_{i,K}}$ does not involve the inverse of the wave function $\veps_{i,K}$.

\begin{remark}\label{rem:GammaTrho}
\emph{
Let $T_{(0,n)} \deq \{\sigma_{(0,k)}  \mid (x_1,x_2) \mapsto (x_1, x_2 + k)\} < \sym(V \times V)$. 
Let $\rho$ be the generating function of a wave cipher $\Phi$, and $\overline\rho$ the corresponding Feistel operator
\[\overline\rho\, = \,\begin{pmatrix} 0_n & 1_n \\ 1_n & \rho\end{pmatrix}.\] Then
$\overline{\veps_{i,K}} = \overline\rho\,\sigma_{(0,k_i)}$, and so
$
\langle\, T_{(0,n)}, \overline\rho \, \rangle
$
is the group generated by the round functions of the wave cipher $\Phi$.
}
\end{remark}

\begin{figure}[]
\begin{center}
 \includegraphics[scale=0.14]{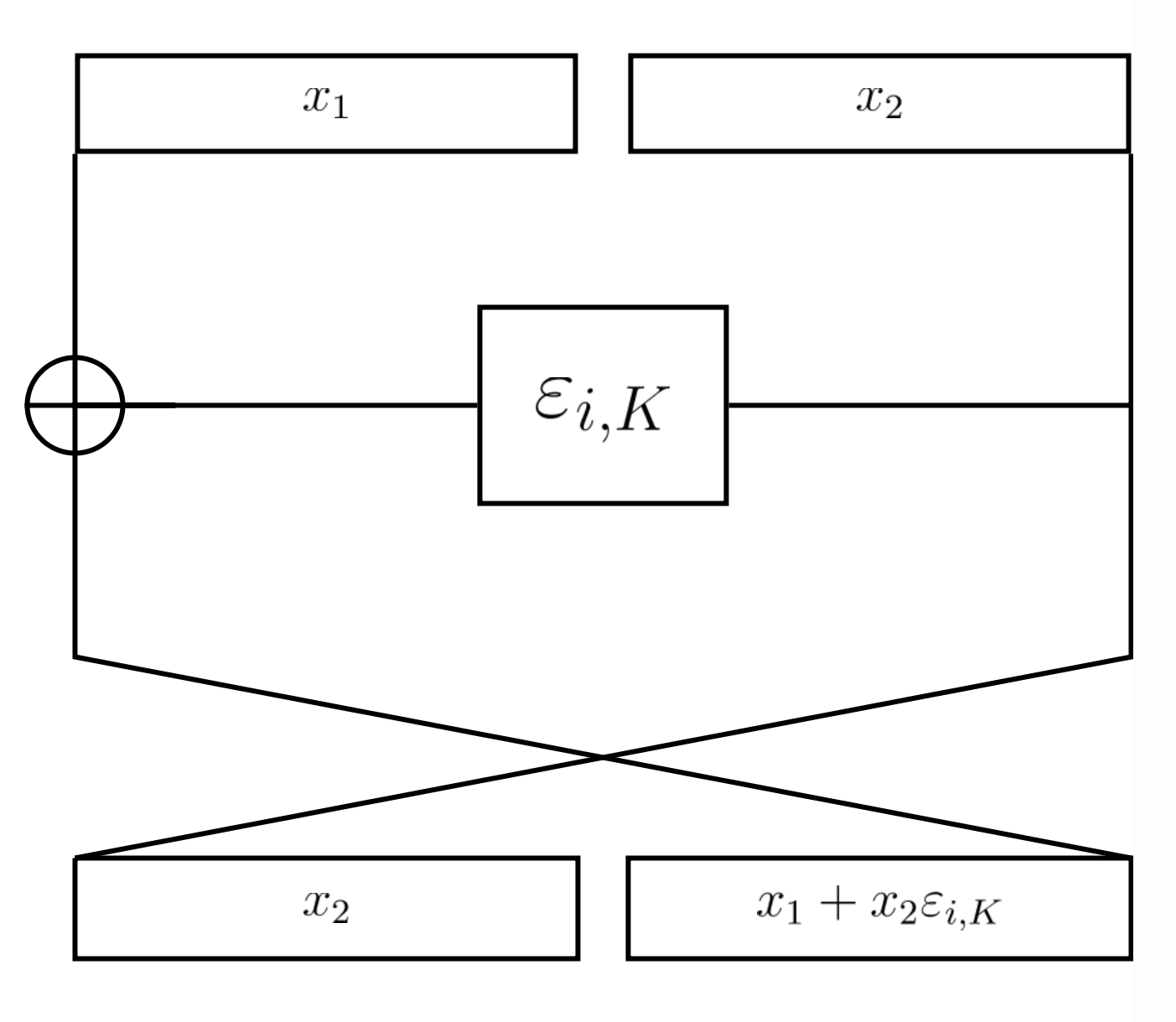}
\end{center}
\caption{Feistel structure of wave ciphers}
\label{fig.fei}
\end{figure}


\section{Group-theoretical study of Wave ciphers }\label{sec:security}
In this section, first we show a group-theoretical result which, as consequence, links the primitivity for a Substitution-Permutation Network and the primitivity for a Feistel Network having respectively round functions and F-functions with the same structure. By exploiting this result we prove that the group generated by the round functions of a wave cipher is primitive under some reasonable cryptographic assumptions on the underlying wave functions. 


\subsection{Security reduction}
Let us consider the group generated by the rounds of an FN which uses as F-functions the round functions of a primitive SPN. Here we prove a group-theoretical result which implies the primitivity of this group under the assumption that the wave functions are  invertible. In particular  this result is used to show that  the group  generated by the round functions of a wave cipher is primitive if the group\footnote{Note that the hypothesis that the wave functions are invertible allows to consider this group.} generated by the round functions of an SPN-like cipher having as round functions the same wave functions is primitive, as depicted in Fig. \ref{fig.reduc}.\\

\begin{figure}
\begin{center}
 \includegraphics[scale=0.135]{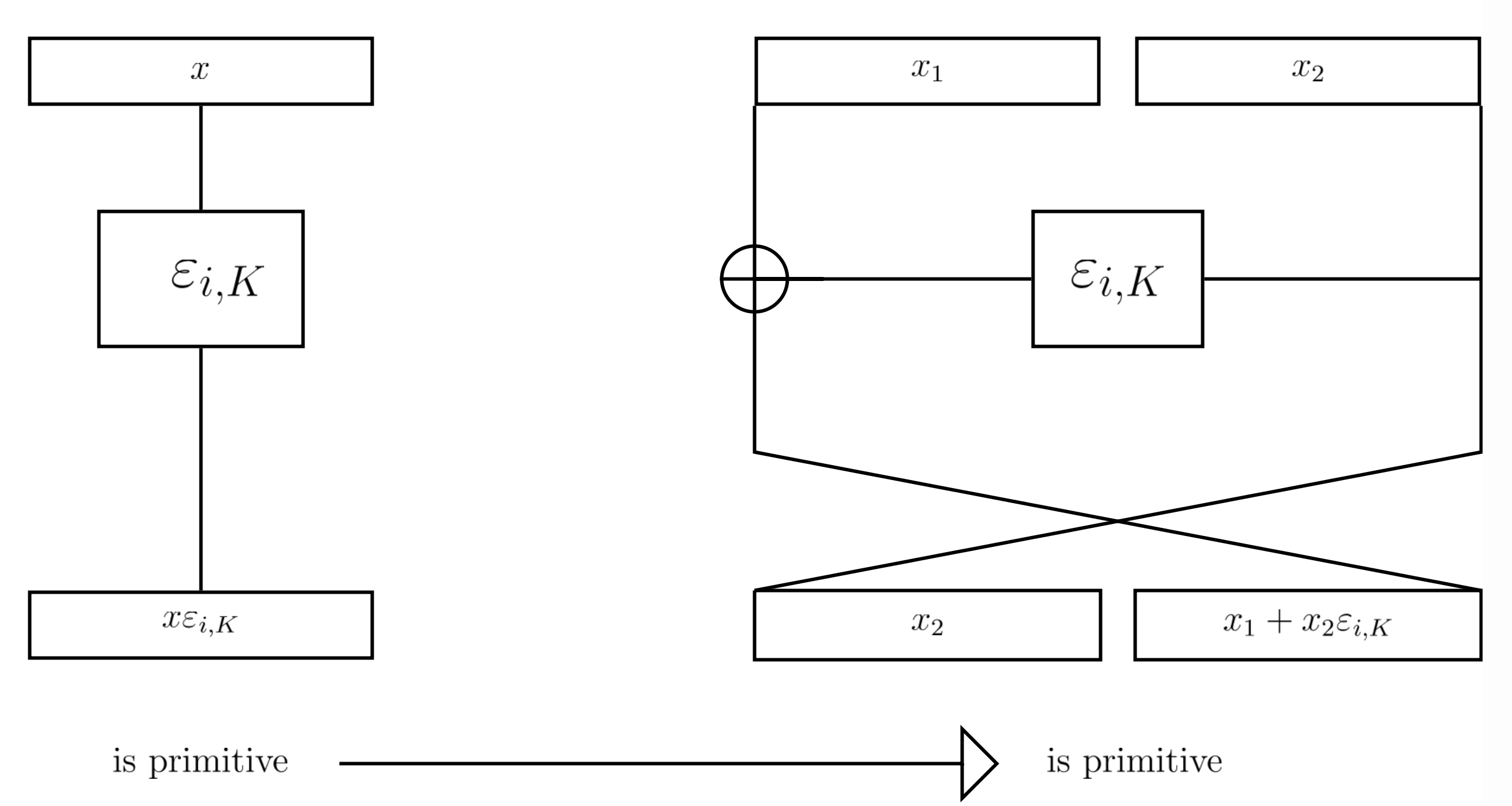}
\end{center}
\caption{Feistel to SPN reduction}
\label{fig.reduc}
\end{figure}
Let us recall that $T_{(0,n)} = \{\sigma_{(0,k)}  \mid (x_1,x_2) \mapsto (x_1, x_2 + k)\} < \sym(V \times V)$ and define
\begin{itemize}
\item $T_n \deq \{\sigma_k   \mid x \mapsto x+k\} < \sym(V)$,
\item $T_{(n,0)} \deq \{\sigma_{(k,0)}  \mid (x_1,x_2) \mapsto (x_1+k, x_2)\} < \sym(V \times V)$,
\item $T_{(n,n)} \deq \{\sigma_{(k_1,k_2)} \mid (x_1,x_2) \mapsto (x_1 + k_1, x_2 + k_2)\} < \sym(V \times V).$
\end{itemize}
Notice that $T_n \cong T_{(0,n)} \cong T_{(n,0)} < T_{(n,n)}$. \\

Let $\rho$ be any element in $\sym(V)$, $\overline\rho$ be the corresponding Feistel operator, and let $\Gamma\deq \langle\, T_{(0,n)}, \overline\rho \, \rangle.$ Since we aim at characterising imprimitivity blocks for $\Gamma$ using Lemma \ref{lemma:trans} and Lemma \ref{translatioBlocks}, we need to individuate a transitive subgroup of $\Gamma$. For this reason, the following alternative presentation of $\Gamma$ is useful. 

\begin{lemma}\label{lem:Tnn}
$\Gamma = \langle\, T_{(n,n)}, \overline\rho \, \rangle$.
\end{lemma}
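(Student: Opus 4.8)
The plan is to establish the two inclusions separately, the non-trivial one resting on a single conjugation identity for the Feistel operator.

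First, the inclusion $\Gamma \leq \langle T_{(n,n)},\overline\rho\rangle$ is immediate: by the definitions above one has $T_{(0,n)} < T_{(n,n)}$, hence $\Gamma = \langle T_{(0,n)},\overline\rho\rangle \leq \langle T_{(n,n)},\overline\rho\rangle$. For the reverse inclusion it suffices to prove that $T_{(n,0)} \leq \Gamma$: indeed $T_{(n,0)}$ and $T_{(0,n)}$ are commuting subgroups of $\sym(V\times V)$ with trivial intersection whose product has order $2^{2n}=|T_{(n,n)}|$, so $T_{(n,n)} = T_{(n,0)}\,T_{(0,n)} \leq \langle T_{(n,0)}, T_{(0,n)}\rangle$, and then $\langle T_{(n,n)},\overline\rho\rangle \leq \Gamma$.

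To show $T_{(n,0)} \leq \Gamma$ I would compute the conjugate of the ``right'' translation group by $\overline\rho$. Using the explicit action $(x_1,x_2)\overline\rho = (x_2, x_1+x_2\rho)$ together with $\overline\rho^{\,-1} = \begin{pmatrix}\rho & 1_n\\ 1_n & 0_n\end{pmatrix}$, i.e. $(x_1,x_2)\overline\rho^{\,-1} = (x_1\rho + x_2, x_1)$ (recorded just before the lemma), a direct chase gives, for every $k\in V$,
\[
(x_1,x_2)\,\overline\rho\,\sigma_{(0,k)}\,\overline\rho^{\,-1}
= (x_2,\, x_1+x_2\rho+k)\,\overline\rho^{\,-1}
= (x_1+k,\, x_2),
\]
so that $\overline\rho\,\sigma_{(0,k)}\,\overline\rho^{\,-1} = \sigma_{(k,0)}$. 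Hence $T_{(n,0)} = \overline\rho\,T_{(0,n)}\,\overline\rho^{\,-1} \leq \Gamma$, and the proof is complete.

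There is essentially no obstacle here: the whole argument is the one-line conjugation identity above plus the elementary fact $T_{(n,n)} = T_{(n,0)}\,T_{(0,n)}$. The only point requiring care is that $\overline\rho$ is \emph{not} a linear (nor affine) map when $\rho$ is non-linear, so one cannot invoke the usual shortcut that conjugating a translation by a linear map yields a translation; the cancellation of the non-linear term $x_2\rho$ happens precisely because the same term occurs in the formulas for both $\overline\rho$ and $\overline\rho^{\,-1}$. It is therefore worth double-checking the stated formula for $\overline\rho^{\,-1}$ (equivalently, that the displayed matrix is a two-sided inverse of $\overline\rho$) before carrying out the conjugation, and keeping the postfix convention consistent throughout.
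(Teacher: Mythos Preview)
Your proof is correct and follows essentially the same route as the paper: the paper derives the identity $\overline\rho\,\sigma_{(0,k)} = \sigma_{(k,0)}\,\overline\rho$ by a direct computation (without writing down $\overline\rho^{\,-1}$), which is exactly your conjugation relation $\overline\rho\,\sigma_{(0,k)}\,\overline\rho^{\,-1} = \sigma_{(k,0)}$, and then concludes $\sigma_{(k_1,k_2)} = \sigma_{(k_1,0)}\sigma_{(0,k_2)} \in \Gamma$.
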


\begin{proof}
Obviously $\Gamma= \langle\, T_{(0,n)}, \overline\rho \, \rangle < \langle\, T_{(n,n)}, \overline\rho \, \rangle$. On the other hand, given $x_1, x_2, k \in V$ one has 
\begin{align*}
(x_1,x_2)\overline{\rho}\sigma_{(0,k)}=   & (x_1,x_2)\begin{pmatrix} 0_n & 1_n \\ 1_n & \rho\end{pmatrix}\sigma_{(0,k)} \\ 
= & (x_2,x_1+x_2\rho+k )\\ 
= & (x_1+k,x_2)\begin{pmatrix} 0_n & 1_n \\ 1_n & \rho\end{pmatrix} \\
 = & (x_1,x_2)\sigma_{(k,0)}\overline{\rho}.
\end{align*}
Hence for each $k \in V$ it holds $\overline{\rho}\sigma_{(0,k)} = \sigma_{(k,0)}\overline{\rho}$, and consequently $\sigma_{(k,0)}\in \Gamma$. Therefore for each $k_1, k_2 \in V$,  $\sigma_{(k_1,k_2)} = \sigma_{(k_1,0)}\sigma_{(0,k_2)} \in \Gamma$.
\end{proof}

\noindent Being $T_{(n,n)}$ a transitive subgroup of $\Gamma$ and noticing that the subgroups of $T_{(n,n)}$ are of the form $ \{ \sigma_{u} : u \in U \}$, where $U$ is a subgroup of $V \times V$, we obtain the following.
\begin{lemma}\label{rem1}
  If $\Gamma$ is imprimivitive in its action on $V \times V$, then a block system is made of
  the cosets  of a subgroup of  $V \times V$, i.e. it  is 
  \begin{equation*}
    \{
      U + v
      \mid
      v \in V \times V
      \},
  \end{equation*}
  where $U$ is a non-trivial and proper subgroup of $V \times V$.
\end{lemma}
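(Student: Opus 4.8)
The plan is to combine the two permutation-group lemmas already stated (Lemma \ref{lemma:trans} and Lemma \ref{translatioBlocks}) with the alternative presentation $\Gamma = \langle\, T_{(n,n)}, \overline\rho \, \rangle$ just proved in Lemma \ref{lem:Tnn}. First I would observe that $T_{(n,n)}$ is a transitive subgroup of $\Gamma$: indeed it is the full translation group of the $\mathbb F_2$-vector space $V \times V$, and translation groups act transitively (this is part of Lemma \ref{translatioBlocks}). Since $T_{(n,n)} \le \Gamma$ and $\Gamma$ acts on $\M = V \times V$, transitivity of $\Gamma$ itself is immediate, so the only content is the description of a putative block system.

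Next, suppose $\Gamma$ is imprimitive on $V \times V$. Then by definition there exists a non-trivial $\Gamma$-invariant partition $\mathcal B$ of $V \times V$. By Lemma \ref{lemma:trans}, since $T_{(n,n)}$ is a transitive subgroup of $\Gamma$, the partition $\mathcal B$ is also a block system for $T_{(n,n)}$. Now I would invoke the standard fact about translation groups: a $T_{(n,n)}$-invariant partition of $V \times V$ must be the partition into cosets of some subgroup. Concretely, let $U$ be the block of $\mathcal B$ containing $0 = (0,0)$. For any $u \in U$, the translation $\sigma_u \in T_{(n,n)}$ sends the block $U$ (which contains $0$) to a block containing $0\sigma_u = u \in U$; since blocks in a partition are either equal or disjoint, this forces $U\sigma_u = U$, i.e. $U + u = U$, so $U$ is closed under addition, and being finite it is a subgroup. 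Then for arbitrary $v \in V \times V$ the block through $v$ is $U\sigma_v = U + v$, so $\mathcal B = \{U + v \mid v \in V \times V\}$. Non-triviality of $\mathcal B$ translates to $U$ being neither $\{0\}$ nor all of $V \times V$, i.e. $U$ is a non-trivial proper subgroup. This is exactly the claimed conclusion.

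I do not expect a genuine obstacle here; the statement is essentially an unpacking of the already-quoted lemmas, and the only thing to be careful about is making the coset argument for translation groups explicit (which the paper may simply cite from \cite{Cam} or fold into the use of Lemma \ref{translatioBlocks}). One could equally phrase the proof via Lemma \ref{lemma:block}, identifying the block as the orbit $0H$ for a subgroup $H$ with $\Gamma_0 < H < \Gamma$; intersecting $H$ with $T_{(n,n)}$ and noting that the stabiliser of $0$ in $T_{(n,n)}$ is trivial recovers the subgroup $U \le V \times V$ with $0U = U$. Either route is short; the substantive work was done in Lemma \ref{lem:Tnn}, whose role is precisely to exhibit the full translation group $T_{(n,n)}$ inside $\Gamma$ so that Lemma \ref{translatioBlocks} can be applied to pin down the shape of any block system.
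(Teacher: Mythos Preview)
Your argument is correct and follows essentially the same route as the paper: the paper's proof is a one-line citation of Lemma~\ref{lemma:trans} and Lemma~\ref{translatioBlocks}, relying on the preceding observation (from Lemma~\ref{lem:Tnn}) that $T_{(n,n)}$ is a transitive subgroup of $\Gamma$. You have simply unpacked the coset argument for translation groups that the paper leaves implicit in its appeal to Lemma~\ref{translatioBlocks}.
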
 
\begin{proof}
See Lemma \ref{lemma:trans} and Lemma \ref{translatioBlocks}.
\end{proof}

According to Lemma~\ref{rem1}, in order to prove that $\Gamma$ is primitive it is sufficient to prove that no subgroup of $V \times V$ is a block.
The following theorem, due to 
 Goursat~\cite[Sections 11--12]{goursat}, characterises the
subgroups of the direct product  of two groups in terms of
suitable sections of the direct factors (see
also~\cite{Petrillo}).  We apply this result to the additive group $V \times V$.

\begin{theorem}[Goursat's Lemma \cite{goursat}]
  Let $G_1$  and $G_2$ be two  groups. There
  exists  a bijection between  
  \begin{enumerate}
  \item 
    the set  of all subgroups  of the 
    direct  product  $G_1\times   G_2$,  and  
  \item 
    the  set   of  all  triples
    $(A/B,C/D,\psi )$, where 
    \begin{itemize}
    \item $A$ is a subgroup of $G_{1}$,
    \item $C$ is a subgroup of $G_{2}$,
    \item $B$ is a normal subgroup of $A$,
    \item $D$ is a normal subgroup of $C$, and
    \item $\psi: A/B\to C/D$ is a group isomorphism.
    \end{itemize}
  \end{enumerate}

\noindent In this bijection, each subgroup of $G_1\times G_2$ can be uniquely
  written as
  \begin{equation*}
    U_{\psi}= \{
      (a,c) \in A \times C 
      :
      (a + B) \psi =c + D
      \}.
  \end{equation*}
\end{theorem}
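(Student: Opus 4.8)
The plan is to establish the stated bijection by writing down its inverse explicitly and checking that the two constructions undo one another. I work in the additive notation used for $V\times V$; since $V$ is abelian every subgroup is normal, so the requirements that $B\trianglelefteq A$ and $D\trianglelefteq C$ are automatic in the application (for the fully general statement one verifies them by a one-line conjugation argument, recalled below).

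First, from a triple to a subgroup. Given $A\le G_1$, $C\le G_2$, $B\trianglelefteq A$, $D\trianglelefteq C$ and an isomorphism $\psi:A/B\to C/D$, set
\[
U_\psi=\{(a,c)\in A\times C:\ (a+B)\psi=c+D\}.
\]
This is a subgroup: it contains $(0,0)$ because $\psi$ sends the trivial coset to the trivial coset, and if $(a_1,c_1),(a_2,c_2)\in U_\psi$ then $(a_1-a_2+B)\psi=(a_1+B)\psi-(a_2+B)\psi=(c_1-c_2)+D$, so $(a_1-a_2,c_1-c_2)\in U_\psi$.

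Next, from a subgroup to a triple. Given $U\le G_1\times G_2$, let $A$ and $C$ be the images of $U$ under the two coordinate projections, and put $B=\{a:(a,0)\in U\}$ and $D=\{c:(0,c)\in U\}$. Projections of a subgroup are subgroups, so $A\le G_1$, $C\le G_2$; clearly $B\le A$, $D\le C$; and $B\trianglelefteq A$ (in general: for $a'\in A$ pick $c'$ with $(a',c')\in U$; conjugating $(b,0)$ by $(a',c')$ leaves the second coordinate trivial, so the result lies in $B$), and similarly $D\trianglelefteq C$. Define $\psi:A/B\to C/D$ by $(a+B)\psi=c+D$ for any $c$ with $(a,c)\in U$. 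The substantive point is that $\psi$ is well defined: if $(a,c_1),(a,c_2)\in U$ then $(0,c_1-c_2)\in U$, so $c_1-c_2\in D$; and if $a_1-a_2\in B$ with $(a_i,c_i)\in U$, then $(a_1-a_2,0)\in U$, hence $(a_2,c_1)=(a_1,c_1)-(a_1-a_2,0)\in U$, and subtracting $(a_2,c_2)$ gives $c_1-c_2\in D$. Then $\psi$ is a homomorphism by additivity, surjective because every $c\in C$ occurs in some pair of $U$, and injective because $(a+B)\psi=D$ together with $(a,c)\in U$ forces $c\in D$, hence $(0,c)\in U$, hence $(a,0)\in U$ and $a\in B$.

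Finally, the two round trips agree. Starting from $U$, the associated $U_\psi$ equals $U$: one inclusion is immediate from the definition of $\psi$, and for the other, given $(a,c)\in U_\psi$ pick $(a,c')\in U$, so $c+D=(a+B)\psi=c'+D$, whence $(0,c-c')\in U$ and $(a,c)=(a,c')+(0,c-c')\in U$. Starting from a triple, building $U_\psi$ and reading off its projections and kernels returns the same $A,B,C,D$ — for $A$, any $a\in A$ pairs with a representative of $(a+B)\psi$; for $B$, $(a,0)\in U_\psi$ iff $(a+B)\psi=D$ iff $a\in B$, using injectivity of $\psi$ and $B\psi=D$; likewise for $C$ and $D$ — and the recovered isomorphism is $\psi$ by construction. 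Uniqueness of the expression $U=U_\psi$ is precisely the injectivity of the correspondence just built. The only steps that need care are the two well-definedness checks for $\psi$ and the bookkeeping in the round trips; there is no genuine obstacle, and for the application to $V\times V$ the abelianness makes the normality conditions vacuous.
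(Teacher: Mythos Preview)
Your proof is correct and follows the standard route to Goursat's Lemma: build $U_\psi$ from a triple, extract the triple $(A/B,C/D,\psi)$ from a subgroup via projections and kernels, and verify the two constructions are mutually inverse. However, there is nothing to compare against: the paper does not prove this theorem at all. It is stated as a classical result with a citation to Goursat~\cite{goursat} (and to~\cite{Petrillo}) and then used as a black box in the proof of Theorem~\ref{main1}. The only related argument the paper supplies is the short proof of Lemma~\ref{lemma:psiforphi}, which rewrites $U_\psi$ in terms of a lifting homomorphism $\varphi$; your proposal does not need that reformulation because you work directly with $\psi$.
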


\noindent Note that the isomorphism $\psi: A/B\to C/D$ is induced by a homomorphism $\varphi: A \to C$ such that $(a+B)\psi=a\varphi + D$ for any $a\in A$, and $B\varphi\leq D$. Such homomorphism is not unique. 
\begin{lemma}\label{lemma:psiforphi}
  In the above notation, given any homomorphism $\phi$ inducing $\psi$, we have 
 \begin{equation}\label{eq:upsi}
    U_{\psi}
    =
    \{
      (a, a \varphi + d)
      :
      a \in A, d \in D
      \}.
  \end{equation}
\end{lemma}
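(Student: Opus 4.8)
The plan is to prove the set equality in \eqref{eq:upsi} by a straightforward double inclusion, using nothing more than the defining property that $\varphi$ induces $\psi$, namely $(a+B)\psi = a\varphi + D$ for every $a \in A$, together with the Goursat description $U_{\psi}=\{(a,c)\in A\times C : (a+B)\psi = c+D\}$.

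For the inclusion $U_{\psi}\subseteq\{(a,a\varphi+d):a\in A,\ d\in D\}$, I would take an arbitrary $(a,c)\in U_{\psi}$, so that $(a+B)\psi = c+D$ by definition. Combining this with $(a+B)\psi = a\varphi + D$ yields $a\varphi + D = c + D$, hence $c + a\varphi \in D$ (working additively over $\F_2$); setting $d \deq c + a\varphi$ gives $c = a\varphi + d$ with $d \in D$, which exhibits $(a,c)$ in the required form. For the reverse inclusion, given $a\in A$ and $d\in D$ I would first check that $(a,\,a\varphi+d)$ actually lies in $A\times C$: this holds because $a\varphi\in C$ (as $\varphi$ maps $A$ into $C$) and $d\in D\leq C$. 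Then, since $d\in D$, we have $(a+B)\psi = a\varphi + D = (a\varphi + d) + D$, so the pair satisfies the membership condition defining $U_{\psi}$.

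There is essentially no real obstacle here; the statement is a direct unwinding of the Goursat correspondence. The only point worth a moment's attention is that the right-hand side of \eqref{eq:upsi} should not depend on the particular choice of $\varphi$ inducing $\psi$ — and indeed it does not, since any two such homomorphisms differ by a homomorphism $A\to D$, so the coset $a\varphi + D$ is unchanged. It may also be worth noting explicitly that the condition $B\varphi\leq D$, which is needed to ensure that $\psi$ is well defined from $\varphi$, is not invoked again in this argument; only the relation $(a+B)\psi = a\varphi + D$ is used.
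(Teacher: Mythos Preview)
Your proof is correct and follows essentially the same double-inclusion argument as the paper, just with the two inclusions presented in the opposite order and with slightly more explicit bookkeeping (e.g.\ checking that $a\varphi+d\in C$). The additional remarks on independence from the choice of $\varphi$ and on the role of the condition $B\varphi\leq D$ are accurate but go beyond what the paper includes.
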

\begin{proof}
Note first that the  right-hand side of~\eqref{eq:upsi} is contained
  in $U_{\psi}$, since for $a \in A$ and $d \in D$ we have
$
    (a + B) \psi = a\varphi + D =  a\varphi+ d
    + D, 
$
  that is, $(a, a\varphi+ d) \in U_{\psi}$. 
  Moreover $U_{\psi}$ is contained in the right-hand side
  of~\eqref{eq:upsi}. Indeed, if $(a, c) \in U_{\psi}$ we have
$
    a \varphi + D
    =
    (a + B) \psi
    =
    c + D,
$
  so that $c = a \phi + d$ for some $d \in D$.
\end{proof}
This is our main result of this section. 
\begin{theorem}\label{main1}
Let $\rho\in \mathrm{Sym}(V)\setminus \mathrm{AGL}(V)$, $\overline\rho$ be the corresponding Feistel operator, and denote by $\Gamma = \Span{ T_n, \rho }$ and by $\overline\Gamma = \Span{ T_{(0,n)}, \overline\rho }$. If $\Gamma$ is primitive on $V$, then $\overline\Gamma$ is primitive on $V \times V$.
\end{theorem}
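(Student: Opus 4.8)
The plan is to argue by contradiction: assume $\overline\Gamma$ is imprimitive on $V\times V$, and derive that $\Gamma$ must then be imprimitive on $V$, contradicting the hypothesis. By Lemma~\ref{rem1} (applied with $\overline\rho$ in place of the generating function, which is legitimate since $\overline\Gamma=\Span{T_{(0,n)},\overline\rho}$ and, by Lemma~\ref{lem:Tnn}, equals $\Span{T_{(n,n)},\overline\rho}$ with $T_{(n,n)}$ transitive), any block system for $\overline\Gamma$ consists of the cosets of a non-trivial proper subgroup $U\le V\times V$. Apply Goursat's Lemma to write $U=U_\psi$ in terms of a triple $(A/B,C/D,\psi)$ with $A,C\le V$, $B\trianglelefteq A$, $D\trianglelefteq C$, and via Lemma~\ref{lemma:psiforphi} fix a homomorphism $\varphi:A\to C$ inducing $\psi$, so that $U=\{(a,a\varphi+d):a\in A,\ d\in D\}$.

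Next I would translate the $\overline\Gamma$-invariance of $\{U+v\}$ into algebraic constraints on $A,B,C,D,\varphi$. Invariance under $T_{(0,n)}$ (or more strongly $T_{(n,n)}\le\overline\Gamma$) forces $U$ to be a subgroup — already built in — and also tells us nothing new, but invariance under the Feistel operator $\overline\rho$ is the crucial condition. Computing $U\overline\rho$: a generic element $(a,a\varphi+d)\in U$ maps under $\overline\rho$ to $(a\varphi+d,\ a+(a\varphi+d)\rho)$. For $\{U+v\}$ to be $\overline\rho$-invariant we need $U\overline\rho=U+v_0$ for some $v_0$; equivalently $U\overline\rho-U\overline\rho$, i.e. the difference set, must again be $U$, and $0\overline\rho$-image must lie in a single coset consistently. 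Concretely, since $U$ is a group it suffices that $U\overline\rho$ is a coset of $U$, i.e. $(U\overline\rho)-(U\overline\rho)=U$. The first coordinate of $U\overline\rho$ ranges over $A\varphi+D=C$ (using that $\varphi$ is onto $C$ modulo $D$ is not automatic — rather the image is $A\varphi+D$, which by Goursat equals $C$ since $\psi$ is onto). So the first coordinate of $U$, which is $A$, must equal $C$; hence $A=C$. Moreover the ``kernel direction'': the set of $(0,c)\in U$ is $\{0\}\times D$ (since $a\varphi+d=c$ with $a=0$ gives $c=d\in D$), while the set of second-coordinate-zero elements $(a,0)\in U$ requires $a\varphi+d=0$, i.e. $a\varphi\in D$, i.e. $a\in B':=D\varphi^{-1}$, giving $B'\times\{0\}\subseteq$ first-coordinate-fibre. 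Comparing $U\overline\rho$ with $U$ coordinate-wise: the second coordinate of $U\overline\rho$ at first-coordinate value $c=a\varphi+d$ is $a+c\rho$ modulo the fibre; matching fibres forces $D=B$ (or at least $D$ and $B$ to coincide after the identification $A=C$), and then the condition degenerates to: for all $a\in A$, $a+ (a\varphi)\rho \in A$ modulo $B$ in a way compatible with the coset structure — i.e. the subspace $U$ with $A=C$, $B=D$ is $\overline\rho$-invariant as a coset iff $A\rho\subseteq A+B$ and $\rho$ respects the quotient $A/B$ appropriately. Carefully done, this should yield that $A$ (equivalently $A/B$ with a $\rho$-compatible structure) is a $\Gamma$-invariant block on $V$: $A$ is closed under translations $T_n$ trivially, and the derived condition $A\rho\subseteq A$ (after reducing the $B$-ambiguity, using $\rho\notin\AGL(V)$ to rule out the degenerate cases where $B$ or $D$ absorbs everything) says exactly that $\{A+v:v\in V\}$ is $\Gamma$-invariant.

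Then I would verify this $A$ is a genuine block system for $\Gamma$: it is $\Gamma$-invariant by the previous step, and it is non-trivial and proper — here is where the hypothesis $\rho\notin\AGL(V)$ and the assumed non-triviality/properness of $U$ must be combined to exclude $A=\{0\}$, $A=V$, and the boundary cases $B=A$ or $D=\{0\}$ that would make $U$ of the form $\{0\}\times C$ or $A\times C$ etc. (these forced subgroups would actually make $U$ fail $\overline\rho$-invariance unless $\rho$ is affine — this is the precise point where $\rho\notin\AGL(V)$ enters). By Lemma~\ref{lemma:block}/Lemma~\ref{translatioBlocks} this contradicts the primitivity of $\Gamma$ on $V$, completing the proof.

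\textbf{Main obstacle.} The delicate part is the Goursat bookkeeping: extracting from the single algebraic condition ``$U\overline\rho$ is a coset of $U$'' the two facts $A=C$ and $B=D$ (with $\varphi$ compatible with $\rho$), and then ruling out all the degenerate Goursat triples. One must be careful that $\rho$ mixes the two coordinates in $\overline\rho$, so the invariance condition is genuinely a statement linking $A,B,C,D$ and $\rho$ simultaneously rather than two independent conditions; isolating the $\Gamma$-block $A\subseteq V$ cleanly, and checking it is non-trivial and proper precisely when $U$ is and $\rho\notin\AGL(V)$, is where the real work lies.
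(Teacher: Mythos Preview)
Your overall strategy---assume $\overline\Gamma$ imprimitive, invoke Goursat to parametrise the block $U$, and extract a $\Gamma$-block inside $V$---matches the paper's, and your derivation of $A=C$ from comparing first coordinates of $U$ and $U\overline\rho$ is correct (and in fact tidier than the paper's indirect route through $D\le A$ and $A\varphi\le A$). However, there is a genuine gap in the subsequent step.

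You work only with the single condition $U\overline\rho = U + v_0$, and from it you hope to conclude that $A\rho\subseteq A$ (up to the $B$-ambiguity), claiming this ``says exactly that $\{A+v:v\in V\}$ is $\Gamma$-invariant.'' That implication is false: since $\rho$ is non-linear, $A\rho = A$ does \emph{not} imply that $(A+v)\rho$ is a coset of $A$ for every $v\in V$, which is what $\Gamma$-invariance of the partition $\{A+v\}$ requires. To obtain a genuine block system for $\Gamma$ you must exploit the full strength of the block-system hypothesis for $\overline\Gamma$, namely that $(U+(v_1,v_2))\overline\rho$ is a coset of $U$ for \emph{every} $(v_1,v_2)\in V\times V$, not just for $(v_1,v_2)=(0,0)$.

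This is precisely how the paper proceeds. After extracting $D\le A$, $A\varphi\le A$, $D\varphi\le D$ from the special cases $U\overline\rho$ and $U\overline\rho^{-1}$, it writes out $(U+(v_1,v_2))\overline\rho = U + (v_2,\,v_1+v_2\rho)$ for arbitrary $(v_1,v_2)$ and, setting $a=0$ in the resulting identity, obtains $(D+v_2)\rho = D + v_2\rho$ for every $v_2\in V$. That is exactly the statement that $\{D+v\}$ is $\langle T_n,\rho\rangle$-invariant. So the candidate $\Gamma$-block is $D$, not $A$; only in the degenerate case $D=\{0\}$ does $A$ take over (and there the paper again uses arbitrary $v_2$ to get $(A+v_2)\rho=A+v_2\rho$, with $\rho\notin\AGL(V)$ killing the subcase $A=V$). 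Your assertion $B=D$ and your focus on $A$ as the block from the outset are both symptoms of not having exploited the translate-by-$v$ condition; once you put that condition back in and redo the computation, you will be led naturally to $D$.
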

\noindent Before proving Theorem \ref{main1}, we show how this group-theoretical result can be helpful to us.  Let $\Phi = \{E_K \mid K \in \mathcal K\} \subseteq \sym(V\times V)$ be an $r$-round wave block cipher with a bijective generating function $\rho = \gamma\lambda$. By Remark \ref{rem:GammaTrho} one has that
$
\Gamma_{\infty}(\Phi) = \langle\, T_{(0,n)}, \overline\rho \, \rangle
$
is the group generated by the round functions of the wave cipher $\Phi$. Moreover,
$
\langle\, T_{n}, \rho \, \rangle
$
is the group generated by the wave-shaped round functions of an SPN-like cipher whose round functions are ${\veps_{i,K}}= \rho\,\sigma_{(0,k_i)}$. Therefore, from Theorem \ref{main1}, next result directly follows.
\begin{corollary}\label{cor:genprim}
Let $\Phi$ be a wave cipher, $\rho\in\sym(V)$ its generating function and $\overline\rho$ the Feistel operator induced by $\rho$. If $\Span{ T_n, \rho }$ is primitive on $V$, then $\Gamma_{\infty}(\Phi) = \langle\, T_{(0,n)}, \overline\rho \, \rangle$ is primitive on $V \times V$.
\end{corollary}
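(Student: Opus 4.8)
The plan is to obtain the corollary as a straightforward application of Theorem \ref{main1}, so that no new group-theoretic work is needed; what must be supplied is, on one hand, the identification of $\Gamma_{\infty}(\Phi)$ with the group $\overline\Gamma = \Span{T_{(0,n)}, \overline\rho}$ appearing in Theorem \ref{main1}, and, on the other, the verification that the generating function $\rho = \gamma\la$ meets the hypothesis $\rho \in \sym(V)\setminus\AGL(V)$ of that theorem. The hypothesis that $\Span{T_n, \rho}$ is primitive on $V$ is already identical in the two statements, so it can be carried over verbatim.

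For the identification I would invoke the surjectivity of the key-schedule, which is the clause of Definition \ref{waveDef} that was not used in proving Theorem \ref{main1}. By Definition \ref{waveDef} the $h$-th round function of $\Phi$ is $\overline{\veps_{h,K}}$ with $\veps_{h,K} = \rho\,\sigma_{k_h}$, and by Remark \ref{rem:GammaTrho} this equals $\overline\rho\,\sigma_{(0,k_h)}$. Fixing $h$ and letting $K$ run over $\mathcal K$, surjectivity of the key-schedule with respect to round $h$ guarantees that $k_h$ realises every value of $V$, whence $\Gamma_h(\Phi) = \Span{\overline\rho\,\sigma_{(0,k)} : k \in V}$. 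Taking $k = 0$ shows $\overline\rho \in \Gamma_h(\Phi)$, and then $\overline\rho^{-1}(\overline\rho\,\sigma_{(0,k)}) = \sigma_{(0,k)}$ shows $T_{(0,n)} \le \Gamma_h(\Phi)$; the reverse inclusion being clear, $\Gamma_h(\Phi) = \Span{T_{(0,n)}, \overline\rho}$ for every $h$, and therefore $\Gamma_{\infty}(\Phi) = \Span{\Gamma_h(\Phi) : 1 \le h \le r} = \Span{T_{(0,n)}, \overline\rho} = \overline\Gamma$.

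It remains to place $\rho$ in $\sym(V)\setminus\AGL(V)$ and then apply Theorem \ref{main1}. Here I would use that $\gamma$ is a non-linear (APN) S-box: writing the derivative as $\hat\rho_u = \hat\gamma_u\,\la$, affineness of $\rho$ would require $\hat\gamma_u\,\la$ to be constant for every $u$, hence the whole image of each derivative $\hat\gamma_u$ to collapse into a single coset of $\Ker\la$. For an APN $\gamma$ the derivatives $\hat\gamma_u$ have large image (Section \ref{sec:not}), and the properness of $\la$ (Definition \ref{propMix}) forbids such a large brick-supported set of differences from lying inside $\Ker\la$; this is immediate in the parallel-kernel case of Section \ref{sec:exa}, where the modest expansion makes $\dim(\Ker\la \cap W_j)$ too small to absorb the derivative image. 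Granting $\rho \in \sym(V)\setminus\AGL(V)$, Theorem \ref{main1} applied to the corollary's hypothesis that $\Span{T_n, \rho}$ is primitive on $V$ yields that $\overline\Gamma$ is primitive on $V \times V$, and by the identification of the previous paragraph this group is precisely $\Gamma_{\infty}(\Phi)$. I expect this last non-affineness check to be the only real obstacle: the composite $\gamma\la$ of a non-affine S-box with a non-injective diffusion layer is not non-affine merely by construction, so the argument must quantify the non-linearity of $\gamma$ against the combinatorics of $\Ker\la$ rather than rely on the non-linearity of $\gamma$ alone.
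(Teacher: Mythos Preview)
Your approach is exactly the paper's: identify $\Gamma_\infty(\Phi)$ with $\langle T_{(0,n)}, \overline\rho\rangle$ via Remark~\ref{rem:GammaTrho} and then apply Theorem~\ref{main1}. Your derivation of that identification from key-schedule surjectivity is a correct and more explicit version of what the paper leaves to the remark.

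Where you go beyond the paper is in flagging the hypothesis $\rho\in\sym(V)\setminus\AGL(V)$ of Theorem~\ref{main1}. The paper's one-line derivation of the corollary does not address this at all, so you are not missing something the paper supplies; rather, you have spotted a point the paper glosses over. That said, your sketch for discharging it is not a proof at the generality of Corollary~\ref{cor:genprim}: you invoke APN S-boxes and a parallel kernel, neither of which is part of Definition~\ref{waveDef}, and your appeal to properness of $\lambda$ overreaches, since Definition~\ref{propMix} constrains only preimages of walls, not arbitrary brick-supported difference sets. Your argument does go through under the additional hypotheses of Theorems~\ref{main2}--\ref{maincoro} (there $\dim(\Ker\lambda\cap W_j)$ is indeed too small to absorb $\Imm(\hat\gamma_{j,u})$), and that is the only setting in which the paper ever invokes the corollary. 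So the obstacle you correctly anticipate is real, is shared with the paper's own treatment, and is in practice removed by the stronger hypotheses of the later theorems rather than by the corollary's hypotheses alone.
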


\begin{proof}[Proof of Theorem \ref{main1}]
Let us suppose that $\overline{\Gamma}=\Span{ T_{(0,n)}, \overline{\rho} }=\Span{{T_{(n,n)}}, \overline{\rho} }$ is imprimitive, so there exists a non-trivial and proper subgroup $U$ of $V \times V = (\mathbb{F}_2)^n\times (\mathbb{F}_2)^n$ such that $\{U + (v_1,v_2) \mid (v_1, v_2) \in V \times V\}$ is a block system.
In particular,
\begin{equation}\label{eq:sgrblock}
U\overline{\rho}=U+(v_1,v_2)
\end{equation}
for some $(v_1,v_2) \in V\times V$. Since $(0,0)\orho=(0,0\rho)$, we can assume $v_1=0$ and $v_2=0\rho$.
With reference to Lemma \ref{lemma:psiforphi} and its notation, we have $U=\{(a, a \varphi + d) \mid a \in A, d \in D\}$, and by \eqref{eq:sgrblock}, for any $a\in A$ and $d\in D$ there exist $x\in A$ and $y\in D$ such that
\[
(a, a\varphi+ d)\begin{pmatrix} 0_n & 1_n \\ 1_n & \rho\end{pmatrix}=(x, x\varphi+ y +0\rho),
\]
that is
\[
(a\varphi+ d, a+(a\phi+d)\rho)=(x, x\varphi+ y +0\rho).
\]
Hence, it holds $x=a\phi+d$, and considering $a=0$, we obtain $D\leq A$. Otherwise, considering $d=0$, we obtain $A\phi\leq A$.
Similarly, we have
\begin{equation}\label{eq:sgrblock2}
U\overline{\rho}^{\,-1}=U+(v'_1,v'_2)
\end{equation}
for some $(v'_1,v'_2) \in V\times V$. Since $\orho^{\,-1}=\begin{pmatrix} \rho & 1_n \\ 1_n & 0_n\end{pmatrix}$, we can consider $v'_1=0\rho$ and $v'_2=0$.
In this case, for any $a\in A$ and $d\in D$ there exist $x\in A$ and $y\in D$ such that
\[
(a\rho+a\varphi+ d, a)=(x+0\rho, x\varphi+ y ).
\]
Hence we have  $x=a\rho+a\varphi+ d +0\rho$. Substituting $x= a \phi +d $ in $x\varphi+ y$ and being $\phi$ a homomorphism, it holds $y=a+a\rho\phi+a\varphi^2+ d\phi +0\rho\phi$. Then, considering $a=0$, we obtain $y=d\phi$, and thus $D\phi\leq D$. Now, in the general case, letting $(v_1,v_2)\in V\times V$ it holds 
\begin{equation}\label{eq:block}
(U+(v_1,v_2))\overline{\rho}=U+(v'_1,v'_2)
\end{equation}
for some $(v'_1,v'_2) \in V\times V$. By definition of $\orho$, we can take $v'_1=v_2$ and $v'_2=v_1+v_2\rho$. By Lemma \ref{lemma:psiforphi} and by \eqref{eq:block}, for any $a\in A$ and $d\in D$ there exist $x\in A$ and $y\in D$ such that
\[
(a+v_1, a\varphi+ d+v_2)\begin{pmatrix} 0_n & 1_n \\ 1_n & \rho\end{pmatrix}=(x+v_2, x\varphi+ y +v_1+v_2\rho),
\]
that is,
\[
(a\varphi+ d+v_2, a+v_1+(a\phi+d+v_2)\rho)=(x+v_2, x\varphi+ y +v_1+v_2\rho),
\]
hence we have  $x=a\varphi+ d$. Substituting $x= a \phi + d$ in $x\varphi+ y +v_1+v_2\rho$,
\[
a+v_1+(a\phi+d+v_2)\rho+a\varphi^2+v_1+v_2\rho=y+ d\varphi.
\]
Then, considering $a=0$, we obtain $(d+v_2)\rho=y+ d\varphi+v_2\rho$. Since $D\varphi\leq D$, then $y+d\varphi\in D$ and so
\[
(D+v_2)\rho=D+v_2\rho.
\]
Note that we obtain the equality since $\rho$ is a permutation. If $D\ne \{0\}, (\F_2)^n$, then we proved that the imprimitivity of $\overline\Gamma$ implies the imprimitivity of $\Gamma$. To complete the proof,  it remains to consider the cases $D=(\F_2)^n$ and $D=\{0\}$.\\
\noindent${\bf\left[D=(\F_2)^n\right]}$
We proved that $D \leq A$, and from the hypotheses holds that $D\leq C$ and $\psi$ is an isomorphism between $A/B$ and $C/D$. Since $D=(\F_2)^n$, we have $D=C=A=B=(\F_2)^n$, which contradicts that $U$ is a proper subgroup of $V\times V$.\\
\noindent ${\bf \left[D=\{0\}\right]}$ First, note that in this case $B\varphi=\{0\}$. Moreover, by Lemma \ref{lemma:psiforphi}, 
\[U=\{
      (a, a \varphi)
      \mid
      a \in A
      \},
      \]
      and by \eqref{eq:block} for any $a\in A$ there exists $x\in A$ such that
\[
(a\varphi+ v_2, a+v_1+(a\phi+v_2)\rho)=(x+v_2, x\varphi +v_1+v_2\rho).
\]
Proceedings as before, it holds
\begin{equation}\label{eq:d=0}
a+a\varphi^2=(a\phi+v_2)\rho+v_2\rho.
\end{equation} 
Note that for any $a\in B\leq A$,  $a\varphi=0$ and so we obtain $a+v_2\rho=v_2\rho$ for any $a\in B$, that is, $B=\{0\}$. Therefore, if $D=\{0\}$, also $B = \{0\}$ and so $\varphi=\psi$ is an isomorphism between $A$ and $C$. Moreover, since $A\varphi$ is contained in both $A$ and $C$, then $A=C$ and $\varphi$ is an automorphism of $A$.  If $A=\{0\}$, then $A=C=D=B=\{0\}$, which contradicts that $U$ is non-trivial. If $A$ is a proper subgroup of $(\F_2)^n$, then by \eqref{eq:d=0} and since both $a+a\varphi^2$ and $a\varphi$ belong to $A$ we have
\[
(A+v_2)\rho=A+v_2\rho,
\]
and so $\Gamma$ is imprimitive. If $A=(\F_2)^n$, in equation \eqref{eq:d=0} we can consider $v_2=0$ since $a\phi+v_2$ is an element of $A=(\F_2)^n$, so we have
\[
(a\phi)\rho=a+a\varphi^2+0\rho.
\]
Since the function $x+x\varphi^2$ is linear, we proved that $\rho\in\mathrm{AGL}(V)$, which is a contradiction.
\end{proof}


\subsection{Conditions on SPN-like wave ciphers}
In the light of Theorem \ref{main1}, given a wave cipher $\Phi$ whose generating function  $\rho$ is invertible, we obtain that the group $\Gamma_\infty(\Phi)$ is primitive if we manage to prove that the group $\Span{ T_n, \rho }$ is primitive. The latter represents the group generated by the rounds of an SPN-like cipher featuring wave functions in the place of classical round functions. Although for such a cipher it may be difficult to compute the computational inverse of the encryption functions, since it has an SPN structure with non-invertible layers, we can still study its theoretical properties. In this section we underline which properties of the generating function $\rho$ guarantee that $\Span{ T_n, \rho }$ is primitive. From now on let us assume that $\rho\in\sym(V)$.\\

Let $\rho = \gamma\la$ be the generating function of a wave cipher. We can always assume that $\gamma$ maps $0$ into $0$, since it is possible to add $0\gamma$ to the round key of the previous round. Then, since $\la$ is linear, it holds $0\rho = 0$.\\

In  the following, we define a generalisation of the notion of strong anti-invariance given in \cite{CDVS09}, which is a condition in our second main theorem. 
Let us recall that, as in Section \ref{sec:wavecip}, $V= V_1\oplus V_2\oplus \ldots\oplus V_b$ and $W= W_1\oplus W_2\oplus \ldots\oplus W_b$, with $V_j = (\F_2)^s$ and  $W_j = (\F_2)^t$ for each $j \in \{1,2,\ldots,b\}$.
 \begin{definition}
Let $j \in \{1,2,\ldots, b\}$, $\gamma_j: V_j \rightarrow W_j$ be an S-box such that $0 \gamma_j = 0$, and $\lambda : W \rightarrow V$ be a surjective linear map.  Given $0 \leq \delta < s$, $\gamma_j$  is \emph{$\delta$-non-invariant with respect to $\lambda$} if for any proper subspaces $V' < V_j$ and $W'< W_j$ such that $V' \gamma_j+ \Ker\la\cap W_j = W'$, then $\dim(W')<s-\delta$.
 \end{definition}
\noindent Notice that if $0 \leq \delta< \delta' < s$ and $\gamma_j$ is $\delta'$-non-invariant w.r.t. $\lambda$, then it is also $\delta$-non-invariant w.r.t. $\lambda$.

\begin{lemma}\label{lemmaPrim}
Let $\rho = \gamma\la\in\sym(V)$ be the generating function of a wave cipher. Then $\Span{ T_n, \rho }$ is imprimitive if and only if there exists a proper and non-trivial subgroup U of V such that $(u+v)\gamma+v\gamma \in U\lambda^{-1}$, for any $u \in U$ and $v \in V$. In this case, $\{U+v \mid v \in V\}$ is a block system for $\Span{ T_n, \rho }$.\end{lemma}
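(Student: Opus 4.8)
The plan is to first pin down the possible shape of a block system using the translation subgroup, and then translate $\rho$-invariance of such a system into the stated condition on $\gamma$ and $\lambda$.

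First I would observe that $\Span{ T_n, \rho }$ contains the translation group $T_n$, which is transitive on $V$. Arguing exactly as in Lemma~\ref{rem1} (i.e.\ via Lemma~\ref{lemma:trans} and Lemma~\ref{translatioBlocks}, using that every block containing $0$ for the regular action of $(V,+)$ is a subgroup), any block system for $\Span{ T_n, \rho }$ must be of the form $\mathcal{B}_U \deq \{U+v \mid v \in V\}$ for some proper and non-trivial subgroup $U$ of $(V,+)$. Now $\mathcal{B}_U$ is automatically $T_n$-invariant, and $T_n$ together with $\rho$ generate $\Span{ T_n, \rho }$, so $\mathcal{B}_U$ is a block system for $\Span{ T_n, \rho }$ if and only if it is $\rho$-invariant. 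Here I would note the small point that, since $\rho$ is a bijection of the \emph{finite} set $V$, the condition $(U+v)\rho \in \mathcal{B}_U$ for all $v$ already forces $\rho$ to permute $\mathcal{B}_U$, hence also $\rho^{-1}$, so no separate check for $\rho^{-1}$ is needed.

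Next I would make $\rho$-invariance explicit. Since $0 \in U$, the block $(U+v)\rho$ contains $v\rho$, so $(U+v)\rho \in \mathcal{B}_U$ forces $(U+v)\rho = U + v\rho$; comparing cardinalities (both sides have size $|U|$), this equality is equivalent to the inclusion $(U+v)\rho \subseteq U+v\rho$, that is, to $(u+v)\rho + v\rho \in U$ for every $u \in U$ and $v \in V$. Using $\rho = \gamma\lambda$, the linearity of $\lambda$, and working in characteristic $2$, one has $(u+v)\rho + v\rho = \bigl((u+v)\gamma + v\gamma\bigr)\lambda$, and $\bigl((u+v)\gamma + v\gamma\bigr)\lambda \in U$ is by definition the same as $(u+v)\gamma + v\gamma \in U\lambda^{-1}$. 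Thus $\mathcal{B}_U$ is a block system for $\Span{ T_n, \rho }$ precisely when $(u+v)\gamma + v\gamma \in U\lambda^{-1}$ for all $u \in U$, $v \in V$.

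Combining the two steps gives both directions. If $\Span{ T_n, \rho }$ is imprimitive, a block system has the form $\mathcal{B}_U$ with $U$ proper and non-trivial, and the invariance computation yields the stated condition; conversely, if such a $U$ exists, then $\mathcal{B}_U$ is $T_n$-invariant and $\rho$-invariant, hence $\Span{ T_n, \rho }$-invariant, and it is non-trivial because $U$ is proper and non-trivial, so (the group being transitive, as it contains $T_n$) $\Span{ T_n, \rho }$ is imprimitive with block system $\{U+v \mid v \in V\}$. I do not anticipate a real obstacle: the proof is essentially bookkeeping, and the only places needing care are the observation that $\rho$-invariance of a finite partition is already enough, and the fact that the image of the coset $U+v$ under $\rho$ is forced to be exactly $U+v\rho$ rather than some other coset.
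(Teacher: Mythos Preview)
Your proof is correct and follows essentially the same approach as the paper: use the transitive subgroup $T_n$ together with Lemma~\ref{lemma:trans} and Lemma~\ref{translatioBlocks} to force any block system to be of coset form $\{U+v\}$, and then rewrite $\rho$-invariance of cosets as the condition $(u+v)\gamma + v\gamma \in U\lambda^{-1}$ via linearity of $\lambda$. Your version is in fact more complete than the paper's, which only spells out the forward implication; you also handle the converse and note explicitly why checking $\rho$ alone (rather than $\rho^{-1}$) suffices.
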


\begin{proof}
Since $T_n \leq \Span{ T_n, \rho }$, if $\Span{ T_n, \rho }$ is imprimitive, then $\{U+v \mid v \in V \}$ is a block system, for some proper and non-trivial subgroup $U$ of $V$. Let $v \in V$, then $(U+v)\rho = U+v \rho = U+v\gamma \lambda$. 
Therefore for any $u \in U$ and $v \in V$ it holds 
$
(u+v)\gamma \lambda+v\gamma\lambda \in U
$
and, since $\la$ is linear,
$
(u+v)\gamma+v\gamma \in U\lambda^{-1}.
$
\end{proof}

The following is the main result of this section.
\begin{theorem}\label{main2}
Let $\rho = \gamma\la\in\sym(V)$ be the generating function of a wave cipher $\Phi$. If there exists $1\leq\delta<s$ such that for each $j \in \{1,2,\ldots,b\}$ the S-box $\gamma_j$ is
\begin{itemize}
		\item  $2^\delta$-differentially uniform,
		\item  $\delta$-non-invariant with respect to $\lambda$,
\end{itemize}
and if $\Ker\la = \bigoplus_{j=1}^{b} \Ker\la \cap W_j$,
then $\langle T_n, \rho \rangle$ is primitive (and so it is $\Gamma_{\infty}(\Phi))$.
\end{theorem}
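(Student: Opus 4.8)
The plan is to prove the contrapositive form: assume $\langle T_n,\rho\rangle$ is imprimitive and derive a contradiction; primitivity of $\Gamma_\infty(\Phi)$ is then immediate from Corollary~\ref{cor:genprim}. By Lemma~\ref{lemmaPrim} imprimitivity gives a proper, non-trivial subgroup $U<V$ with $(u+v)\gamma+v\gamma\in U\la^{-1}$ for all $u\in U$, $v\in V$; equivalently $\Imm(\hat\gamma_u)\subseteq M$ for every $u\in U$, where $M:=U\la^{-1}$. I fix the notation $K_j:=\Ker\la\cap W_j$ (so $\Ker\la=\bigoplus_j K_j$ by hypothesis), $B_j:=U\cap V_j$, $U_j$ for the image of $U$ under the projection $V\to V_j$ onto the $j$-th brick, and $\pi_j\colon W\to W_j$ for the corresponding projection; as in the paragraph preceding the statement we may assume $0\gamma=0$, hence $0\rho=0$ and $U\rho=U$.

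The argument rests on a few structural facts. (i) Since every wave function is a permutation, Lemma~\ref{perm_cond} gives $\{a+b\mid a,b\in\Imm\gamma\}\cap\Ker\la=\{0\}$, which together with the parallel kernel descends brick-wise to $(\Imm\gamma_j+\Imm\gamma_j)\cap K_j=\{0\}$; hence the cosets $z+K_j$, $z\in\Imm\gamma_j$, are pairwise distinct, so $|\Imm\gamma_j+K_j|=2^{s}|K_j|\le 2^{t}$, and with $\sum_j\dim K_j=\dim\Ker\la=b(t-s)$ this forces $\dim K_j=t-s$ for every $j$. Applying $\delta$-non-invariance to a one-dimensional subspace of $V_j$ also yields $\dim K_j<s-\delta$, so in particular $|K_j|<2^{s-\delta}$. (ii) Taking $v=0$ in the block relation gives $U\gamma\subseteq M$, and $u\gamma+u'\gamma\in\Ker\la$ forces $u=u'$ by the same disjointness, so the $\Ker\la$-cosets met by $U\gamma$ are distinct; comparing cardinalities with $M$ gives $M=U\gamma+\Ker\la$, from which one computes $M\cap W_j=B_j\gamma_j+K_j$ and $\pi_j(M)=U_j\gamma_j+K_j$, both subspaces of $W_j$.

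The heart of the proof is to show that $U$ is a sum of bricks. First, a localisation step: $U_j\neq\{0\}\Rightarrow B_j\neq\{0\}$. Pick $u\in U$ with $u_j\neq 0$; if $\mathrm{supp}(u)=\{j\}$ we are done, else write an element of $\Imm(\hat\gamma_u)\subseteq M=U\gamma+\Ker\la$ as $u'\gamma+k$ — disjointness forces $\mathrm{supp}(u')\subseteq\mathrm{supp}(u)$ — and then vary only the $j$-th input of the derivative while freezing the others: disjointness shows the brick-components of $u'$ away from $j$ cannot move, while the $j$-th component does move (since $\Imm(\hat{\gamma_j}_{u_j})$ is too large by (i) to lie in a single $K_j$-coset), so two such choices give two elements of $U$ differing exactly in brick $j$, i.e.\ a non-zero element of $B_j$. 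Second, $B_j\in\{\{0\},V_j\}$: if $\{0\}\subsetneq B_j\subsetneq V_j$, then for non-zero $u_j\in B_j$ one has $\Imm(\hat{\gamma_j}_{u_j})\subseteq M\cap W_j=B_j\gamma_j+K_j$, so $2^\delta$-differential uniformity gives $\dim(B_j\gamma_j+K_j)\ge s-\delta$; since this subspace has the form $V'\gamma_j+K_j$ with $V'=B_j$ proper, $\delta$-non-invariance forbids it from being proper in $W_j$, so $B_j\gamma_j+K_j=W_j$, whence $|B_j|\,2^{t-s}=2^{t}$ and $B_j=V_j$, a contradiction. Thus for every $j$ either $U_j=B_j=\{0\}$ or $B_j=V_j$ (so $U_j=V_j$); set $I:=\{j:B_j=V_j\}$.

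Finally I invoke properness of $\la$. The set $I$ is non-empty (otherwise every $B_j=\{0\}$, hence every $U_j=\{0\}$ by localisation and $U=\{0\}$) and is not all of $\{1,\dots,b\}$ (otherwise $U\supseteq\bigoplus_j V_j=V$). For $j\in I$ one has $M\cap W_j=\Imm\gamma_j+K_j$, a subspace of $W_j$ of dimension $s+(t-s)=t$, hence $W_j\subseteq M$; for $j\notin I$ one has $U_j=\{0\}$, hence $\pi_j(M)=K_j$. Therefore $M=\bigl(\bigoplus_{j\in I}W_j\bigr)+\Ker\la=W'+\Ker\la$ with $W':=\bigoplus_{j\in I}W_j$ a wall of $W$, and since $B_j=V_j$ for $j\in I$ we get $V':=\bigoplus_{j\in I}V_j\subseteq U$, so $V'\la^{-1}\subseteq U\la^{-1}=M=W'+\Ker\la$ — contradicting that $\la$ is a proper diffusion layer (Definition~\ref{propMix}). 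Hence $\langle T_n,\rho\rangle$ is primitive, and so is $\Gamma_\infty(\Phi)$. I expect the brick-by-brick rigidification of $U$ to be the main obstacle: a priori $U$ could be a ``subdirect'' subgroup straddling several bricks, and excluding this uses all three hypotheses at once — differential uniformity to keep derivative images large, $\delta$-non-invariance to upgrade ``large'' to ``all of $W_j$'' (and to control $\dim K_j$), and the permutation/parallel-kernel structure to turn this into a dimension count on $U$ — with the localisation manoeuvre being the most delicate bookkeeping.
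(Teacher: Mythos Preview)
Your proof is correct. The overall strategy coincides with the paper's — assume imprimitivity, invoke Lemma~\ref{lemmaPrim} to obtain the block $U$, establish $M=U\la^{-1}=U\gamma+\Ker\la$ and its brick-wise description $M\cap W_j=(U\cap V_j)\gamma_j+K_j$, and finish by contradicting the properness of $\la$ — but the way you rule out the ``subdirect'' case is genuinely different.

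The paper splits into two cases according to whether $U\cap V_j=V_j$ for every $j$ with $U\pi_j\neq\{0\}$. When this fails for some $j$, the paper uses a three-term identity: for any $u\in U$ with $u_j\neq 0$ and any $v_j\in V_j$, the element $u\gamma+(u+v_j)\gamma+v_j\gamma$ lies in $M$ (as the sum of $u\gamma\in U\gamma\subseteq M$ and $(u+v_j)\gamma+v_j\gamma\in M$) and is supported on brick $j$, so $u_j\gamma_j+\Imm(\hat{\gamma_j}_{u_j})\subseteq M\cap W_j$. This immediately gives $\dim(M\cap W_j)\geq s-\delta$, clashing with the bound $<s-\delta$ from $\delta$-non-invariance. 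No localisation is needed: the paper never has to produce a non-zero element of $B_j=U\cap V_j$.

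Your route instead rigidifies $U$ completely: you first prove the localisation $U_j\neq\{0\}\Rightarrow B_j\neq\{0\}$ (by varying only the $j$-th brick in the derivative and using the uniqueness of the decomposition $m=u'\gamma+k$), then show $B_j\in\{\{0\},V_j\}$, and conclude that $U$ is a wall. This is longer, but it is more structural and has a side benefit: you make explicit the fact $\dim K_j=t-s$ for every $j$, which the paper uses implicitly (it is needed to guarantee that $(U\cap V_j)\gamma_j+K_j$ is a \emph{proper} subspace of $W_j$ before invoking $\delta$-non-invariance). Your derivation of $|K_j|<2^{s-\delta}$ from non-invariance is also a clean way to package what is needed for the localisation step. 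Both arguments are valid; the paper's three-term trick buys brevity, while your approach buys a complete description of the would-be block.
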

\begin{proof}
Suppose that $\langle T_n, \rho \rangle$ is imprimitive. For the Lemma \ref{lemmaPrim}, a block system is of the form $\{U+v \mid v \in V\}$, for any proper non-trivial subgroup $U$ of $V$.
Since $U$ is an imprimitivity block and $\rho \in \langle T_n, \rho \rangle$, $U\rho=U+v$ for some $v \in V$.  Moreover, since $0\rho=0$, we obtain $U + v = U$, and consequently 
$U\rho = U\gamma \lambda =U.$
Moreover
\begin{equation}\label{equality}
	U\gamma+\Ker\lambda=U\lambda^{-1} \subseteq W,
\end{equation}
and so $U\gamma+\Ker\lambda$ is a subspace of $W$. For $1 \leq j \leq b$, let $\pi_j:V \longrightarrow V_j$ be the $j$-th projection with respect to the decomposition $V=V_1\oplus \ldots\oplus V_b$, and $I\,\deq\left\{\,j \mid j \in\{1,\ldots,b\},  U\pi_j\ne \{0\}\right\}$. Then two cases are possible: either $U\cap V_j= V_j$ for each $j \in I$, or there exists $j \in I$ such that $U \cap V_j \neq V_j.$ \\

In the first case $U = \bigoplus_{j\in I}V_j$ is a wall.
From \eqref{equality} it holds
 \begin{equation}\label{case1prim}
  (\bigoplus_{j\in I}V_j)\gamma + \Ker \lambda = (\bigoplus_{j\in I}V_j)\lambda^{-1}.	
 \end{equation}
Since $\gamma$ is a parallel transformation, we have 
\begin{equation}\label{case2prim}
(\bigoplus_{j\in I}V_j)\gamma \subset \bigoplus_{j\in I}W_j.	
\end{equation} 
Thus, from \eqref{case1prim} and \eqref{case2prim} it follows that
\begin{equation*}
(\bigoplus_{j\in I}V_j)\lambda^{-1} \subset \bigoplus_{j\in I}W_j+\Ker\lambda,	
\end{equation*} 
which is a contradiction since $\lambda$ is proper. \\

In the second case, let us assume there exists $j \in I$ such that $U \cap V_{j} \neq V_{j}$. From \eqref{equality} we have 
\begin{equation}\label{cond9}
(U\gamma+\Ker\lambda)\cap W_j=U\lambda^{-1}\cap W_j,
\end{equation}
where, since both $\gamma$ and the kernel of $\la$ are parallel,
\begin{equation}\label{cond10}
(U\gamma+\Ker\lambda)\cap W_j = U\gamma\cap W_j+\Ker\lambda\cap W_j = (U\cap V_j)\gamma_j+\Ker\lambda\cap W_j.
\end{equation}
Indeed, let $u = (u_1\gamma_1, u_2\gamma_2,\ldots, u_b\gamma_b) \in U\gamma$, $v = (v_1,v_2,\ldots,v_b) \in \Ker\lambda$, and let us assume that $w \deq u\gamma+v \in (U\gamma+\Ker\lambda)\cap W_j$, hence $w = (0,\ldots, 0, w_j, 0, \ldots, 0)$. For $l \neq j$ we obtain $u_l\gamma_l = v_l$, hence $v_l \in \Imm{\gamma_l} \cap (\Ker\la \cap W_l).$ From Remark \ref{remdisj} and since $\Ker\la$ is parallel, we have $\Imm{\gamma_l} \cap (\Ker\la \cap W_l) = \{0\}$, therefore $v_l= u_l = 0$. 
Thus, \eqref{cond9} and \eqref{cond10} imply that 
\[
 (U\cap V_j)\gamma_j+\Ker\lambda\cap W_j = U\lambda^{-1}\cap W_j,
\]
and, since $\gamma_j$ is $\delta$-non-invariant with respect to $\la$, then
\begin{equation}\label{firstbound}
\dim{( U\lambda^{-1}\cap W_j)} < s-\delta. 
\end{equation}
Furthermore, let $u \in U$ such that $ u_j \deq u\pi_j \neq 0$ and $v_j \in V_j$. Since $\Span{ T_n, \rho }$ is imprimitive, by Lemma \ref{lemmaPrim} it follows that $(u+v_j)\gamma+v_j\gamma \in U\lambda^{-1}$. Moreover $u\gamma\in U\gamma \subset U\lambda^{-1}$, and so $u\gamma+(u+v_j)\gamma+v_j\gamma \in U\lambda^{-1}$, whose components are null,  except possibly for those of the $j$-th brick, i.e. 
\begin{equation}\label{affineder}
u_j \gamma_j+(u_j+v_j)\gamma_j+v_j\gamma_j \in U\lambda^{-1} \cap W_j,
\end{equation}
which implies that $\Imm(\hat{\gamma}_{j_{u_j}}) + u_j \gamma_j\subset U\lambda^{-1} \cap W_j.$
Being $\gamma_j$ $2^\delta$-differentially uniform, it is also $2^\delta$-weakly differentially uniform, and since $u_j \ne 0$ we obtain
\begin{equation*}
2^{s-\delta-1} <| \Imm(\hat{\gamma}_{j_{u_j}})| \leq |U\lambda^{-1} \cap W_j |,
\end{equation*}
therefore $\dim(U\lambda^{-1} \cap W_j) \geq s-d$, which contradicts \eqref{firstbound}.
\end{proof}

\noindent Notice that in the proof of Theorem \ref{main2} we actually exploited that every S-box is $2^\delta$-weakly differentially uniform. Hence, we also proved the more general following result.

\begin{theorem}
Let $\rho = \gamma\la\in\sym(V)$ be the generating function of a wave cipher $\Phi$. If there exists $1\leq\delta<s$ such that for each $j \in \{1,2,\ldots,b\}$ the S-box $\gamma_j$ is
\begin{itemize}
		\item  $2^\delta$-weakly differentially uniform,
		\item  $\delta$-non-invariant with respect to $\lambda$,
\end{itemize}
and if $\Ker\la = \bigoplus_{j=1}^{b} \Ker\la \cap W_j$,
then $\langle T_n, \rho \rangle$ is primitive (and so it is $\Gamma_{\infty}(\Phi))$.
\end{theorem}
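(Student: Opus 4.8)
The plan is to observe that the proof of Theorem~\ref{main2} uses $2^\delta$-differential uniformity of the $\gamma_j$ \emph{only} through the single implication ``$2^\delta$-differentially uniform $\Rightarrow$ $2^\delta$-weakly differentially uniform'', and that weak differential uniformity is itself precisely the inequality $|\Imm(\hat\gamma_{j_{u_j}})|>2^{s-1}/2^\delta$ invoked at the very end of that argument. Hence I would re-run the same argument verbatim with this weaker hypothesis in place of the stronger one. Concretely, arguing by contradiction, suppose $\langle T_n,\rho\rangle$ is imprimitive; since $T_n\leq\langle T_n,\rho\rangle$, Lemma~\ref{lemmaPrim} forces a block system of the form $\{U+v\mid v\in V\}$ for a proper, non-trivial subgroup $U\leq(V,+)$. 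Because $0\gamma=0$ and $\lambda$ is linear we have $0\rho=0$, so the block through $0$ is fixed by $\rho$: $U\rho=U$, i.e.
\[
U\gamma+\Ker\lambda \;=\; U\lambda^{-1}\subseteq W ,
\]
and in particular $U\gamma+\Ker\lambda$ is a subspace of $W$.

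Next I would split into the same two cases as in Theorem~\ref{main2}, letting $I=\{j: U\pi_j\neq\{0\}\}$. If $U\cap V_j=V_j$ for every $j\in I$, then $U=\bigoplus_{j\in I}V_j$ is a wall; since $\gamma$ is parallel, $U\gamma\subset\bigoplus_{j\in I}W_j$, whence the displayed identity gives $(\bigoplus_{j\in I}V_j)\lambda^{-1}\subset\bigoplus_{j\in I}W_j+\Ker\lambda$, contradicting that $\lambda$ is a proper diffusion layer. Otherwise pick $j\in I$ with $U\cap V_j\neq V_j$. Using that both $\gamma$ and $\Ker\lambda$ are parallel (here the hypothesis $\Ker\lambda=\bigoplus_{j}\Ker\lambda\cap W_j$ is needed, together with Remark~\ref{remdisj}, which forces the off-brick coordinates of any element of $(U\gamma+\Ker\lambda)\cap W_j$ to vanish), I obtain
\[
(U\cap V_j)\gamma_j+\Ker\lambda\cap W_j \;=\; (U\gamma+\Ker\lambda)\cap W_j \;=\; U\lambda^{-1}\cap W_j .
\]
Since $U\cap V_j$ is a proper subspace of $V_j$ and $\gamma_j$ is $\delta$-non-invariant with respect to $\lambda$, this gives $\dim(U\lambda^{-1}\cap W_j)<s-\delta$. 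Conversely, choosing $u\in U$ with $u_j:=u\pi_j\neq 0$ and letting $v_j$ run over $V_j$, Lemma~\ref{lemmaPrim} yields $(u+v_j)\gamma+v_j\gamma\in U\lambda^{-1}$; adding $u\gamma\in U\gamma\subset U\lambda^{-1}$ and projecting onto the $j$-th brick shows $\Imm(\hat\gamma_{j_{u_j}})+u_j\gamma_j\subset U\lambda^{-1}\cap W_j$. Now the weak differential uniformity hypothesis gives $|\Imm(\hat\gamma_{j_{u_j}})|>2^{s-1}/2^\delta=2^{s-\delta-1}$ (using $u_j\neq 0$), hence $\dim(U\lambda^{-1}\cap W_j)\geq s-\delta$, contradicting the previous bound.

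The only potential obstacle is purely a matter of bookkeeping: one must check that the parallel-kernel hypothesis and Remark~\ref{remdisj} genuinely permit localising the identity $U\gamma+\Ker\lambda=U\lambda^{-1}$ to a single brick $W_j$, i.e. that an element of $(U\gamma+\Ker\lambda)\cap W_j$ has each of its non-$j$-th bricks equal simultaneously to an image under $\gamma_l$ and to an element of $\Ker\lambda\cap W_l$, and is therefore $0$ there. Once this localisation step is in place, the case analysis and the two conflicting dimension bounds are identical to those of Theorem~\ref{main2}, with ``$2^\delta$-differentially uniform'' replaced throughout by the strictly weaker ``$2^\delta$-weakly differentially uniform'' --- which is exactly the observation that makes this a free strengthening of Theorem~\ref{main2} rather than a new argument.
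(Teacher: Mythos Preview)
Your proposal is correct and is exactly the paper's approach: the paper simply remarks that in the proof of Theorem~\ref{main2} the hypothesis of $2^\delta$-differential uniformity was used only via the implication to $2^\delta$-weak differential uniformity, so the same argument establishes the present theorem verbatim. You have spelled out that argument in full, but the content and structure are identical to the paper's.
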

\noindent The hypothesis of each S-box being $\delta$-non-invariant w.r.t. $\la$ in Theorem \ref{main2} can be weakened by adding a reasonable requirement on the diffusion layer. However, for this result does not exist an alternative version using the weak differential uniformity.

\begin{theorem}\label{maincoro}
Let $\rho = \gamma\la\in\sym(V)$ be the generating function of a wave cipher $\Phi$. If there exists $1\leq\delta<s$ such that for each $j \in \{1,2,\ldots,b\}$ the S-box $\gamma_j$ is
\begin{itemize}
		\item  $2^\delta$-differentially uniform,
		\item  $(\delta - 1)$-non-invariant with respect to $\lambda$,
\end{itemize}
and if the diffusion layer is such that 
\begin{itemize}
\item $\Ker\la = \bigoplus_{j=1}^{b} \Ker\la \cap W_j$,
\item $\dim(\Ker\la\cap W_j) < s - \delta$ for each $j \in \{1,2,\ldots,b\}$,
\end{itemize}
then $\langle T_n, \rho \rangle$ is primitive (and so it is $\Gamma_{\infty}(\Phi))$.
\end{theorem}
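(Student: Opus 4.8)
The plan is to run the argument of Theorem \ref{main2} almost verbatim, replacing only its closing counting step by a finer one. Assume $\langle T_n,\rho\rangle$ is imprimitive; by Lemma \ref{lemmaPrim} a block system is $\{U+v\mid v\in V\}$ with $U<V$ proper and non-trivial, and $(u+v)\gamma+v\gamma\in U\lambda^{-1}$ for all $u\in U,\,v\in V$. Recalling $0\rho=0$ one gets $U\rho=U$, hence $U\gamma+\Ker\lambda=U\lambda^{-1}$ is a subspace of $W$. Let $\pi_j\colon V\to V_j$ be the projections and $I=\{j\mid U\pi_j\ne\{0\}\}$. If $U\cap V_j=V_j$ for every $j\in I$, then $U=\bigoplus_{j\in I}V_j$ is a wall, $U\gamma\subseteq\bigoplus_{j\in I}W_j$ by parallelism, so $U\lambda^{-1}\subseteq\bigl(\bigoplus_{j\in I}W_j\bigr)+\Ker\lambda$, contradicting that $\lambda$ is proper, exactly as in Theorem \ref{main2}. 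Otherwise fix $j\in I$ with $U\cap V_j\ne V_j$, set $W'\deq U\lambda^{-1}\cap W_j$, and, using parallelism of $\gamma$ and of $\Ker\lambda$ together with Remark \ref{remdisj}, obtain $W'=(U\cap V_j)\gamma_j+\Ker\lambda\cap W_j$ as in Theorem \ref{main2}.

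Next I would split on whether $U\cap V_j$ is zero. If $U\cap V_j=\{0\}$, then $W'=\Ker\lambda\cap W_j$, so $\dim W'<s-\delta$ by the hypothesis on the kernel; but, choosing $u\in U$ with $u\pi_j\ne0$ (possible since $j\in I$), the computation of Theorem \ref{main2} gives $u\pi_j\gamma_j+\Imm(\hat\gamma_{j_{u\pi_j}})\subseteq W'$, and $2^\delta$-differential uniformity then forces $2^{s-\delta}\le|\Imm(\hat\gamma_{j_{u\pi_j}})|\le|W'|$, a contradiction. Hence $U\cap V_j$ is a proper non-trivial subspace of $V_j$, so, reasoning as in Theorem \ref{main2} (using in particular that $W'$ is a proper subspace of $W_j$), the $(\delta-1)$-non-invariance of $\gamma_j$ yields $\dim W'<s-(\delta-1)$, i.e. $\dim W'\le s-\delta$; combined with the differential-uniformity bound $\dim W'\ge s-\delta$ this forces $\dim W'=s-\delta$, hence $|W'|=2^{s-\delta}$.

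Finally I would exploit the injectivity of the S-box $\gamma_j$, the defining feature of wave ciphers. Pick $u\in U\cap V_j$ with $u\ne0$ and apply Lemma \ref{lemmaPrim} with directions $v$ ranging over the single brick $V_j$: since $0\gamma_l=0$, the element $(u+v)\gamma+v\gamma\in U\lambda^{-1}$ has all coordinates outside block $j$ equal to $0$ and block-$j$ coordinate equal to $v\hat\gamma_{j_{u}}$, so $\Imm(\hat\gamma_{j_{u}})\subseteq U\lambda^{-1}\cap W_j=W'$ — with no translation term this time, because $u\in V_j$. By $2^\delta$-differential uniformity $|\Imm(\hat\gamma_{j_{u}})|\ge2^{s-\delta}=|W'|$, hence $\Imm(\hat\gamma_{j_{u}})=W'$; but $W'$ is a vector subspace, so $0\in\Imm(\hat\gamma_{j_{u}})$, i.e. $v\gamma_j=(v+u)\gamma_j$ for some $v\in V_j$, and injectivity of $\gamma_j$ then gives $u=0$, the desired contradiction. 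Primitivity of $\Gamma_\infty(\Phi)$ follows via Corollary \ref{cor:genprim}.

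The step I expect to be the main obstacle is recognising what the two extra hypotheses buy: in contrast to Theorem \ref{main2}, the bounds from $(\delta-1)$-non-invariance and from $2^\delta$-differential uniformity pin $\dim W'$ to $s-\delta$ without colliding, so one must invoke $\dim(\Ker\lambda\cap W_j)<s-\delta$ to rule out $U\cap V_j=\{0\}$, and then observe that, once $W'$ is forced to equal the image of a non-zero derivative of the injective map $\gamma_j$, the contradiction is immediate because such an image never contains $0$ whereas a subspace does. One should also keep in mind, as already pointed out in the paper, that the weak differential uniformity would not suffice here, since it only gives $|\Imm(\hat\gamma_{j_{u}})|>2^{s-\delta-1}$, too weak to force equality with $W'$; and, as in the proof of Theorem \ref{main2}, one uses that $W'$ is a proper subspace of $W_j$ so that $(\delta-1)$-non-invariance applies.
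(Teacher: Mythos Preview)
Your proof is correct and follows essentially the same route as the paper: both use the kernel bound $\dim(\Ker\lambda\cap W_j)<s-\delta$ to force $U\cap V_j\neq\{0\}$, then combine $(\delta-1)$-non-invariance with the fact that $0\notin\Imm(\hat\gamma_{j_z})$ for $z\neq 0$ (by injectivity of $\gamma_j$) to reach a contradiction. The only cosmetic difference is the order of the closing step: the paper observes $\Imm(\hat\gamma_{j_z})\subseteq W'$ with $0\notin\Imm(\hat\gamma_{j_z})$ to get $|W'|\geq 2^{s-\delta}+1$ and then contradicts $(\delta-1)$-non-invariance, whereas you first invoke non-invariance to pin $|W'|=2^{s-\delta}$ and then derive $\Imm(\hat\gamma_{j_u})=W'\ni 0$ --- the same three ingredients, rearranged.
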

\begin{proof}
The proof proceeds exactly as that of Theorem \ref{main2}. In this slightly different setting induced from a further requirement on $\la$, we can conclude that $ U\cap V_j \ne \{0\}$. Indeed, being  \[(U\cap V_j)\gamma_j+\Ker\lambda\cap W_j = U\lambda^{-1}\cap W_j,\]
 and having $\dim(U\lambda^{-1}\cap W_j) \geq s -\delta $ and $\dim(\Ker\la\cap W_j) < s - \delta$, there must be a non-zero element in $(U\cap V_j)\gamma_j$, and consequently a non-zero element $z \in U\cap V_j$. Then, reasoning as before, using Lemma \ref{lemmaPrim} one can prove that $\Imm(\hat{\gamma}_{j_{z}}) \subset U\lambda^{-1} \cap W_j$ and $|\Imm(\hat{\gamma}_{j_{z}})|\geq 2^{s-\delta}$.  Moreover, $0 \notin \Imm(\hat{\gamma}_{j_{z}})$, since $z \ne 0$ and $\gamma_j$ is injective. Hence
 \[
|U\lambda^{-1} \cap W_j | \geq 2^{s-\delta} + 1, 
 \]
and therefore $\dim(U\lambda^{-1} \cap W_j) \geq s-\delta+1$. The hypothesis of $(\delta-1)$-non-invariance of $\gamma_j$ leads to a contradiction, hence the desired holds. 
\end{proof}

\section{The security analysis of a concrete instance of wave-cipher}\label{sec:concreteex}
In the previous sections we have introduced a new framework for block ciphers, called wave ciphers, and studied its security with respect to the imprimitivity attack. In particular we primarily aimed at determining sufficient conditions on the choice of the layers which guarantee the resistance of each wave cipher satisfying such conditions against a dangerous algebraic attack. Nevertheless also statistical attacks may represent a threat for the security of these ciphers. However, as already mentioned in Sec.~\ref{sec:intro}, security against statistical attack has to be established considering a specific instance of wave cipher. For this reason, we design a concrete example of a real-world dimension wave cipher by selecting an APN S-box and a proper diffusion layer, and we analyse its resistance against differential and linear cryptanalysis.\\

The proposed instance is a 64-bit Feistel Network featuring eight $4\times5$ APN S-boxes and a $40\times32$ matrix as diffusion layer. Let us assume $n = 32$, $m = 40$, $s= 4$, $t=5$ and $b= 8$, and let us consider again the $4\times 5$ S-box $\gamma_1$ displayed in Figure \ref{sbox}. Recall that \[|\{a+b \mid a,b \in \Imm(\gamma_1)\}|= 31\] and $\xi \deq \hex{11}\notin \{a+b \mid a,b \in \Imm(\gamma_1)\}$. Since we want to design a 32-bit invertible generating function for a wave cipher whose confusion layer $\gamma$ applies $8$ copies of the S-box $\gamma_1$ and whose diffusion layer features a parallel kernel, we determine a proper diffusion layer $\la$ such that $$\Ker\la =\spann_{\F_2}\left\{(\xi,0,0,0,0,0,0,0), (0,\xi,0, 0,0,0,0,0), \ldots , (0,0,0,0,0,0,0, \xi)\right\},$$ where $0$ denotes the zero vector in $(\F_2)^5$. The matrix displayed in Figure \ref{fig:diff} is the chosen example of such a layer. Hence we build the instance of a wave cipher considering $\rho=\gamma\la$ as a bijective generating function (see Definition~\ref{waveDef}).\\

\begin{figure}
\[\la \deq \left(
\begin{smallmatrix}
1 &\cdot &\cdot &\cdot &\cdot &\cdot &\cdot &\cdot &\cdot &\cdot &\cdot &\cdot &\cdot &\cdot &\cdot &\cdot &\cdot &\cdot &\cdot &\cdot &\cdot &\cdot &\cdot &\cdot &\cdot &\cdot &\cdot &\cdot &\cdot &\cdot &\cdot &\cdot \\
\cdot &\cdot &\cdot &\cdot &\cdot &\cdot &\cdot &\cdot &\cdot &\cdot &\cdot &\cdot &\cdot &\cdot &\cdot &\cdot &1 &\cdot &\cdot &\cdot &\cdot &\cdot &\cdot &\cdot &\cdot &\cdot &\cdot &\cdot &\cdot &\cdot &\cdot &\cdot \\
\cdot &1 &\cdot &\cdot &\cdot &\cdot &\cdot &\cdot &\cdot &\cdot &\cdot &\cdot &\cdot &\cdot &\cdot &\cdot &\cdot &\cdot &\cdot &\cdot &\cdot &\cdot &\cdot &\cdot &\cdot &\cdot &\cdot &\cdot &\cdot &\cdot &\cdot &\cdot \\
\cdot &\cdot &\cdot &\cdot &\cdot &\cdot &\cdot &\cdot &\cdot &\cdot &\cdot &\cdot &\cdot &\cdot &\cdot &\cdot &\cdot &1 &\cdot &\cdot &\cdot &\cdot &\cdot &\cdot &\cdot &\cdot &\cdot &\cdot &\cdot &\cdot &\cdot &\cdot \\
1 &\cdot &\cdot &\cdot &\cdot &\cdot &\cdot &\cdot &\cdot &\cdot &\cdot &\cdot &\cdot &\cdot &\cdot &\cdot &\cdot &\cdot &\cdot &\cdot &\cdot &\cdot &\cdot &\cdot &\cdot &\cdot &\cdot &\cdot &\cdot &\cdot &\cdot &\cdot \\
\cdot &\cdot &1 &\cdot &\cdot &\cdot &\cdot &\cdot &\cdot &\cdot &\cdot &\cdot &\cdot &\cdot &\cdot &\cdot &\cdot &\cdot &\cdot &\cdot &\cdot &\cdot &\cdot &\cdot &\cdot &\cdot &\cdot &\cdot &\cdot &\cdot &\cdot &\cdot \\
\cdot &\cdot &\cdot &\cdot &\cdot &\cdot &\cdot &\cdot &\cdot &\cdot &\cdot &\cdot &\cdot &\cdot &\cdot &\cdot &\cdot &\cdot &1 &\cdot &\cdot &\cdot &\cdot &\cdot &\cdot &\cdot &\cdot &\cdot &\cdot &\cdot &\cdot &\cdot \\
\cdot &\cdot &\cdot &1 &\cdot &\cdot &\cdot &\cdot &\cdot &\cdot &\cdot &\cdot &\cdot &\cdot &\cdot &\cdot &\cdot &\cdot &\cdot &\cdot &\cdot &\cdot &\cdot &\cdot &\cdot &\cdot &\cdot &\cdot &\cdot &\cdot &\cdot &\cdot \\
\cdot &\cdot &\cdot &\cdot &\cdot &\cdot &\cdot &\cdot &\cdot &\cdot &\cdot &\cdot &\cdot &\cdot &\cdot &\cdot &\cdot &\cdot &\cdot &1 &\cdot &\cdot &\cdot &\cdot &\cdot &\cdot &\cdot &\cdot &\cdot &\cdot &\cdot &\cdot \\
\cdot &\cdot &1 &\cdot &\cdot &\cdot &\cdot &\cdot &\cdot &\cdot &\cdot &\cdot &\cdot &\cdot &\cdot &\cdot &\cdot &\cdot &\cdot &\cdot &\cdot &\cdot &\cdot &\cdot &\cdot &\cdot &\cdot &\cdot &\cdot &\cdot &\cdot &\cdot \\
\cdot &\cdot &\cdot &\cdot &1 &\cdot &\cdot &\cdot &\cdot &\cdot &\cdot &\cdot &\cdot &\cdot &\cdot &\cdot &\cdot &\cdot &\cdot &\cdot &\cdot &\cdot &\cdot &\cdot &\cdot &\cdot &\cdot &\cdot &\cdot &\cdot &\cdot &\cdot \\
\cdot &\cdot &\cdot &\cdot &\cdot &\cdot &\cdot &\cdot &\cdot &\cdot &\cdot &\cdot &\cdot &\cdot &\cdot &\cdot &\cdot &\cdot &\cdot &\cdot &1 &\cdot &\cdot &\cdot &\cdot &\cdot &\cdot &\cdot &\cdot &\cdot &\cdot &\cdot \\
\cdot &\cdot &\cdot &\cdot &\cdot &1 &\cdot &\cdot &\cdot &\cdot &\cdot &\cdot &\cdot &\cdot &\cdot &\cdot &\cdot &\cdot &\cdot &\cdot &\cdot &\cdot &\cdot &\cdot &\cdot &\cdot &\cdot &\cdot &\cdot &\cdot &\cdot &\cdot \\
\cdot &\cdot &\cdot &\cdot &\cdot &\cdot &\cdot &\cdot &\cdot &\cdot &\cdot &\cdot &\cdot &\cdot &\cdot &\cdot &\cdot &\cdot &\cdot &\cdot &\cdot &1 &\cdot &\cdot &\cdot &\cdot &\cdot &\cdot &\cdot &\cdot &\cdot &\cdot \\
\cdot &\cdot &\cdot &\cdot &1 &\cdot &\cdot &\cdot &\cdot &\cdot &\cdot &\cdot &\cdot &\cdot &\cdot &\cdot &\cdot &\cdot &\cdot &\cdot &\cdot &\cdot &\cdot &\cdot &\cdot &\cdot &\cdot &\cdot &\cdot &\cdot &\cdot &\cdot \\
\cdot &\cdot &\cdot &\cdot &\cdot &\cdot &1 &\cdot &\cdot &\cdot &\cdot &\cdot &\cdot &\cdot &\cdot &\cdot &\cdot &\cdot &\cdot &\cdot &\cdot &\cdot &\cdot &\cdot &\cdot &\cdot &\cdot &\cdot &\cdot &\cdot &\cdot &\cdot \\
\cdot &\cdot &\cdot &\cdot &\cdot &\cdot &\cdot &\cdot &\cdot &\cdot &\cdot &\cdot &\cdot &\cdot &\cdot &\cdot &\cdot &\cdot &\cdot &\cdot &\cdot &\cdot &1 &\cdot &\cdot &\cdot &\cdot &\cdot &\cdot &\cdot &\cdot &\cdot \\
\cdot &\cdot &\cdot &\cdot &\cdot &\cdot &\cdot &1 &\cdot &\cdot &\cdot &\cdot &\cdot &\cdot &\cdot &\cdot &\cdot &\cdot &\cdot &\cdot &\cdot &\cdot &\cdot &\cdot &\cdot &\cdot &\cdot &\cdot &\cdot &\cdot &\cdot &\cdot \\
\cdot &\cdot &\cdot &\cdot &\cdot &\cdot &\cdot &\cdot &\cdot &\cdot &\cdot &\cdot &\cdot &\cdot &\cdot &\cdot &\cdot &\cdot &\cdot &\cdot &\cdot &\cdot &\cdot &1 &\cdot &\cdot &\cdot &\cdot &\cdot &\cdot &\cdot &\cdot \\
\cdot &\cdot &\cdot &\cdot &\cdot &\cdot &1 &\cdot &\cdot &\cdot &\cdot &\cdot &\cdot &\cdot &\cdot &\cdot &\cdot &\cdot &\cdot &\cdot &\cdot &\cdot &\cdot &\cdot &\cdot &\cdot &\cdot &\cdot &\cdot &\cdot &\cdot &\cdot \\
\cdot &\cdot &\cdot &\cdot &\cdot &\cdot &\cdot &\cdot &1 &\cdot &\cdot &\cdot &\cdot &\cdot &\cdot &\cdot &\cdot &\cdot &\cdot &\cdot &\cdot &\cdot &\cdot &\cdot &\cdot &\cdot &\cdot &\cdot &\cdot &\cdot &\cdot &\cdot \\
\cdot &\cdot &\cdot &\cdot &\cdot &\cdot &\cdot &\cdot &\cdot &\cdot &\cdot &\cdot &\cdot &\cdot &\cdot &\cdot &\cdot &\cdot &\cdot &\cdot &\cdot &\cdot &\cdot &\cdot &1 &\cdot &\cdot &\cdot &\cdot &\cdot &\cdot &\cdot \\
\cdot &\cdot &\cdot &\cdot &\cdot &\cdot &\cdot &\cdot &\cdot &1 &\cdot &\cdot &\cdot &\cdot &\cdot &\cdot &\cdot &\cdot &\cdot &\cdot &\cdot &\cdot &\cdot &\cdot &\cdot &\cdot &\cdot &\cdot &\cdot &\cdot &\cdot &\cdot \\
\cdot &\cdot &\cdot &\cdot &\cdot &\cdot &\cdot &\cdot &\cdot &\cdot &\cdot &\cdot &\cdot &\cdot &\cdot &\cdot &\cdot &\cdot &\cdot &\cdot &\cdot &\cdot &\cdot &\cdot &\cdot &1 &\cdot &\cdot &\cdot &\cdot &\cdot &\cdot \\
\cdot &\cdot &\cdot &\cdot &\cdot &\cdot &\cdot &\cdot &1 &\cdot &\cdot &\cdot &\cdot &\cdot &\cdot &\cdot &\cdot &\cdot &\cdot &\cdot &\cdot &\cdot &\cdot &\cdot &\cdot &\cdot &\cdot &\cdot &\cdot &\cdot &\cdot &\cdot \\
\cdot &\cdot &\cdot &\cdot &\cdot &\cdot &\cdot &\cdot &\cdot &\cdot &1 &\cdot &\cdot &\cdot &\cdot &\cdot &\cdot &\cdot &\cdot &\cdot &\cdot &\cdot &\cdot &\cdot &\cdot &\cdot &\cdot &\cdot &\cdot &\cdot &\cdot &\cdot \\
\cdot &\cdot &\cdot &\cdot &\cdot &\cdot &\cdot &\cdot &\cdot &\cdot &\cdot &\cdot &\cdot &\cdot &\cdot &\cdot &\cdot &\cdot &\cdot &\cdot &\cdot &\cdot &\cdot &\cdot &\cdot &\cdot &1 &\cdot &\cdot &\cdot &\cdot &\cdot \\
\cdot &\cdot &\cdot &\cdot &\cdot &\cdot &\cdot &\cdot &\cdot &\cdot &\cdot &1 &\cdot &\cdot &\cdot &\cdot &\cdot &\cdot &\cdot &\cdot &\cdot &\cdot &\cdot &\cdot &\cdot &\cdot &\cdot &\cdot &\cdot &\cdot &\cdot &\cdot \\
\cdot &\cdot &\cdot &\cdot &\cdot &\cdot &\cdot &\cdot &\cdot &\cdot &\cdot &\cdot &\cdot &\cdot &\cdot &\cdot &\cdot &\cdot &\cdot &\cdot &\cdot &\cdot &\cdot &\cdot &\cdot &\cdot &\cdot &1 &\cdot &\cdot &\cdot &\cdot \\
\cdot &\cdot &\cdot &\cdot &\cdot &\cdot &\cdot &\cdot &\cdot &\cdot &1 &\cdot &\cdot &\cdot &\cdot &\cdot &\cdot &\cdot &\cdot &\cdot &\cdot &\cdot &\cdot &\cdot &\cdot &\cdot &\cdot &\cdot &\cdot &\cdot &\cdot &\cdot \\
\cdot &\cdot &\cdot &\cdot &\cdot &\cdot &\cdot &\cdot &\cdot &\cdot &\cdot &\cdot &1 &\cdot &\cdot &\cdot &\cdot &\cdot &\cdot &\cdot &\cdot &\cdot &\cdot &\cdot &\cdot &\cdot &\cdot &\cdot &\cdot &\cdot &\cdot &\cdot \\
\cdot &\cdot &\cdot &\cdot &\cdot &\cdot &\cdot &\cdot &\cdot &\cdot &\cdot &\cdot &\cdot &\cdot &\cdot &\cdot &\cdot &\cdot &\cdot &\cdot &\cdot &\cdot &\cdot &\cdot &\cdot &\cdot &\cdot &\cdot &1 &\cdot &\cdot &\cdot \\
\cdot &\cdot &\cdot &\cdot &\cdot &\cdot &\cdot &\cdot &\cdot &\cdot &\cdot &\cdot &\cdot &1 &\cdot &\cdot &\cdot &\cdot &\cdot &\cdot &\cdot &\cdot &\cdot &\cdot &\cdot &\cdot &\cdot &\cdot &\cdot &\cdot &\cdot &\cdot \\
\cdot &\cdot &\cdot &\cdot &\cdot &\cdot &\cdot &\cdot &\cdot &\cdot &\cdot &\cdot &\cdot &\cdot &\cdot &\cdot &\cdot &\cdot &\cdot &\cdot &\cdot &\cdot &\cdot &\cdot &\cdot &\cdot &\cdot &\cdot &\cdot &1 &\cdot &\cdot \\
\cdot &\cdot &\cdot &\cdot &\cdot &\cdot &\cdot &\cdot &\cdot &\cdot &\cdot &\cdot &1 &\cdot &\cdot &\cdot &\cdot &\cdot &\cdot &\cdot &\cdot &\cdot &\cdot &\cdot &\cdot &\cdot &\cdot &\cdot &\cdot &\cdot &\cdot &\cdot \\
\cdot &\cdot &\cdot &\cdot &\cdot &\cdot &\cdot &\cdot &\cdot &\cdot &\cdot &\cdot &\cdot &\cdot &1 &\cdot &\cdot &\cdot &\cdot &\cdot &\cdot &\cdot &\cdot &\cdot &\cdot &\cdot &\cdot &\cdot &\cdot &\cdot &\cdot &\cdot \\
\cdot &\cdot &\cdot &\cdot &\cdot &\cdot &\cdot &\cdot &\cdot &\cdot &\cdot &\cdot &\cdot &\cdot &\cdot &\cdot &\cdot &\cdot &\cdot &\cdot &\cdot &\cdot &\cdot &\cdot &\cdot &\cdot &\cdot &\cdot &\cdot &\cdot &1 &\cdot \\
\cdot &\cdot &\cdot &\cdot &\cdot &\cdot &\cdot &\cdot &\cdot &\cdot &\cdot &\cdot &\cdot &\cdot &\cdot &1 &\cdot &\cdot &\cdot &\cdot &\cdot &\cdot &\cdot &\cdot &\cdot &\cdot &\cdot &\cdot &\cdot &\cdot &\cdot &\cdot \\
\cdot &\cdot &\cdot &\cdot &\cdot &\cdot &\cdot &\cdot &\cdot &\cdot &\cdot &\cdot &\cdot &\cdot &\cdot &\cdot &\cdot &\cdot &\cdot &\cdot &\cdot &\cdot &\cdot &\cdot &\cdot &\cdot &\cdot &\cdot &\cdot &\cdot &\cdot &1 \\
\cdot &\cdot &\cdot &\cdot &\cdot &\cdot &\cdot &\cdot &\cdot &\cdot &\cdot &\cdot &\cdot &\cdot &1 &\cdot &\cdot &\cdot &\cdot &\cdot &\cdot &\cdot &\cdot &\cdot &\cdot &\cdot &\cdot &\cdot &\cdot &\cdot &\cdot &\cdot 
 \end{smallmatrix}
 \right)
 \]
 \caption{An example of $40\times32$ proper diffusion layer with parallel kernel, where each ``$\cdot$" represents $0$.}
 \label{fig:diff}
 \end{figure}

Before analysing statistical attacks, notice that the previously defined layers satisfy the hypotheses of Theorem \ref{maincoro} with $\delta = 1$, since $\gamma_1$ is $0$-non-invariant with respect to $\Ker\la$, and consequently $\rho$ is such that the group $\langle T_n, \rho \rangle$ is primitive. Then Theorem \ref{main1} implies that the group $\Gamma_{\infty}(\Phi)$ generated by the rounds of a wave cipher having $\gamma\la$ as generating function is primitive.\\

In order to discuss resistance against differential and linear cryptanalysis, let us highlight some properties of the chosen diffusion layer, which is inspired by the one of the cipher PRESENT, even though providing slower diffusion. For such cryptanalytic purposes, proceeding as in \cite{PRESENT}, we can group the eight S-boxes into two groups, as shown in Fig~\ref{fig:groups}. 
\begin{figure}
\centering
\includegraphics[scale= 0.32]{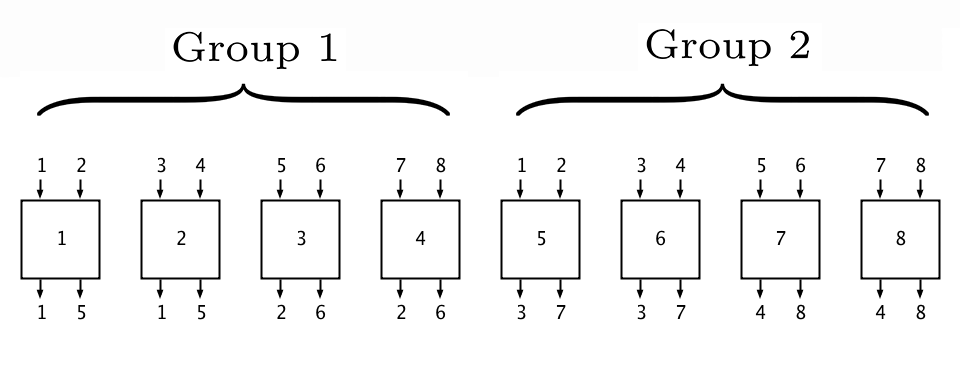}
 \caption{Diffusion properties of the matrix $\la$ of Fig.~\ref{fig:diff}.}
 \label{fig:groups}
 \end{figure}
The following properties holds: 

\begin{enumerate}
\item the input bits to an S-box come from two different S-boxes of the same group;
\item the five output bits for a particular S-box enter two different S-boxes, each of which belongs to a different group in the following round;
\item the output bits of S-boxes of different groups go to different S-boxes;
\item the branch number of $\la$ is $\min_{x \notin \ker(\la)}\left(\w_b(x)+\w_b(x\la)\right)=2$, where $\w_b(x)$ denotes the number of non-null bricks in the message $x$.
\end{enumerate}

The study of differential and linear trails, discussed in the following sections, is usually carried out assuming that the key values are random vectors of the same size as the block. For this reason, we decided not to design a concrete instance of key-scheduling algorithm for our cipher.

\subsection{Differential cryptanalysis}\label{sec:diff}
The S-box of Fig.~\ref{sbox} is APN, hence all its non-trivial differential probabilities are equal to $2^{-3}$ and any 3-round differential trail has at least 2 active S-boxes,  the worst case being the one forming the pattern 1-0-1, occurring when the XOR with the left part of the difference cancels out the output difference of the F-function for the first round. Consequently, the probability of each 3-round differential trail is upper bounded by
\[
\left({2^{-3}}\right)^2=2^{-6}.\]
Therefore, if $r=48$, the probability of a single 48-round differential trail is upper bounded by $\left(2^{-6}\right)^{16}=2^{-96}$.

\subsection{Linear cryptanalysis}\label{sec:lin}
In the case of linear cryptanalysis, the bias of all linear approximations is less or equal than $2^{-2}$. Recalling Matsui's Piling-up Lemma \cite{linear}, the maximal bias of a linear approximation of three rounds involving two active S-boxes
 is \[e_{3} =  2\times\left(2^{-2}\right)^2=2^{-3}.\]
 Consequently we can bound the maximal bias of a 48-round linear approximation by
\[
e_{48}=2^{15}\times e_{3}^{16}=2^{15}\times (2^{-3})^{16}=2^{-33}.
\]
Matsui shows in \cite{linear} that the number  of known plaintexts
required in the attack is approximatively $e^{-2}$, where $e$ denotes the maximal bias of a linear approximation. Therefore an attacker needs approximately $2^{66}$ known plaintexts to mount a key-recovery linear attack against a 48-round encryption of our instance of wave cipher.

\subsection{Other comments}
It is worth noting that, although the proposed cipher features S-boxes with an odd number of output bits, the size of the block is a power of two, which represents the optimal case for implementation needs. For example, the disadvantage of considering an FN featuring $5\times5$ APN S-boxes in place of $4\times5$ S-boxes would be twofold in terms of keeping the cipher lightweight: from one hand, the size of the block would not be a power of two; from the other hand, a $5\times5$ APN S-box requires the storage of 32 values, twice the ones needed for a $4\times5$ S-box.

\section{Conclusions and open problems}\label{sec:concl}
In this work we proposed a new family of ciphers, called \emph{wave ciphers}, whose round functions are the composition of  layers not all invertible. The round functions of a wave cipher are \emph{wave functions}, \ric{vectorial Boolean} functions obtained as the composition of injective non-linear confusion layers enlarging  the message,  surjective linear diffusion layers reducing the message size, and a key addition. Relaxing the requirement that the S-boxes are permutations allowed to consider APN functions to build confusion layers. 
In particular we gave an example of a $4\times 5$ APN S-box. We proposed to use wave functions as F-functions of Feistel Networks, where computing inverse functions is not required in order to perform decryption. 
With regard to their security we showed that, \ric{under the assumption that the generating function is invertible,} and under suitable non-linearity properties of the Boolean functions involved, the group generated by the round functions of a wave ciphers acts primitively. Finally, we presented a concrete example of 64-bit wave cipher and we proved its resistance against differential and linear cryptanalysis, as well as the imprimitivity attack.\\

\noindent Our new construction leaves several problems open, such as determining conditions on the wave functions to ensure that the group generated by the round functions of a wave cipher is the alternating group, or studying the resistance of instances of wave ciphers with respect to other more sophisticated statistical attacks on the wave-shaped structure. Moreover,  to the best of our knowledge, $s\times t$ APN functions with $s<t$ are not very much investigated in literature.
\ric{Finally note that, in order to prove that 
$\Gamma_{\infty}(\Phi) = 
\langle\, T_{(0,n)}, \overline\rho \, \rangle
$ 
is primitive, we adopted the strategy of considering an SPN having as round functions the same wave functions of $\Phi$, and we used Theorem~\ref{main1} to deduce the primitivity of  $\Gamma_{\infty}(\Phi)$ from the primitivity of  $\langle T_n, \rho \rangle$. This forced us to suppose $\rho \in \sym(V)$. However, the bijectivity of $\rho$ is not required to define a wave cipher. For this reason, one of our interests is to prove the same result in more general hypotheses on $\rho$.
}

\paragraph{Acknowledgment}
The authors are grateful to the anonymous referees for their insightful comments and suggestions, and to Andrea Visconti for several useful discussions.
This work has been partially presented at the 13th International Conference on Finite Fields and their Applications (Fq13). Some of the results showed in this paper are included in R. Civino's PhD thesis (supervised by M. Sala) and in I. Zappatore's Master thesis (supervised by R. Aragona, M. Calderini, and M. Sala). \\

\noindent R. Aragona is member of INdAM-GNSAGA (Italy). R. Civino thankfully acknowledges support by the Department of Mathematics of the University of Trento. R. Aragona, R.Civino, and M. Sala thankfully acknowledge support by MIUR-Italy via PRIN 2015TW9LSR ``Group theory and applications''. 


\end{document}